 \newcommand{\Beq}{\begin{equation}}
 \newcommand{\Eeq}{\end{equation}}
 \newcommand{\beq}{\begin{equation*}}
 \newcommand{\eeq}{\end{equation*}}
 \newcommand{\bal}{\begin{align}}
 \newcommand{\eal}{\end{align}}
 \newcommand{\D}{\mathrm{d}}
 \newcommand{\Lc}{\mathcal{L}}
 \newcommand{\Rc}{\mathcal{R}}
 \newcommand{\Sc}{\mathcal{S}}
 \newcommand{\Tc}{\mathcal{T}}
 \newcommand{\Rb}{\mathbb{R}}
 \newcommand{\Sb}{\mathbb{S}}
 \renewcommand{\o}{\omega}
 \newcommand{\FR}{\mathbb{R}} 
 \DeclareMathAlphabet{\bi}{OML}{cmm}{b}{it}
 \DeclareMathAlphabet{\bcal}{OMS}{cmsy}{b}{n}
 \DeclareMathAlphabet{\brmn}{OT1}{cmr}{bx}{n}
\newtheorem{remark}{Remark}
\newtheorem{proposition}{Proposition}
 \newtheorem{theorem}{Theorem}
 \newtheorem{lemma}{Lemma}
 \newtheorem{definition}{Definition}
\title{\vspace{-1cm} Inversion of generalized Radon transform over symmetric $m$-tensor fields in $\mathbb{R}^n$}
\author{Anuj Abhishek\thanks {Department of Mathematics, Applied Mathematics, and Statistics, Case Western Reserve University, United States. \url{anuj.abhishek@case.edu} }\and Rohit Kumar Mishra\thanks{Department of Mathematics, Indian Institute of Technology, Gandhinagar, Gujarat, India. \url{rohit.m@iitgn.ac.in}, \url{rohittifr2011@gmail.com}} \and Chandni Thakkar\thanks{Department of Mathematics, Indian Institute of Technology, Gandhinagar, Gujarat, India. \url{thakkar_chandni@iitgn.ac.in}}}
\begin{document}
\maketitle
\date{}
\begin{abstract}
In this work, we study a set of generalized Radon transforms over symmetric $m$-tensor fields in $\mathbb{R}^n$. The longitudinal/transversal Radon transform and corresponding weighted integral transforms for symmetric $m$-tensor field are introduced. We give the kernel descriptions for the longitudinal and transversal Radon transform. Further, we also prove that a symmetric $m$-tensor field can be recovered uniquely from certain combinations of these integral transforms of the unknown tensor field. This generalizes a recent study done for the recovery of vector fields from its weighted Radon transform data to recovery of a symmetric $m$-tensor field from analogously defined weighted Radon transforms.
\end{abstract}
\textbf{Keywords:} Tensor tomography, generalized Radon transform, weighted integral transform, symmetric $m$-tensor field
\vspace{2mm}

\noindent \textbf{Mathematics subject classification 2010:} 44A12, 44A30, 46F12, 47G10
\section{Introduction}\label{Sec:Introduction}
Radon transform was introduced as line integrals of functions in 2-dimensional space by Johann Radon \cite{Radon_transform} in 1917. The definition was then extended to higher dimensions with integrals of functions taken along hyperplanes. Since then, many different generalizations of the Radon transform and their inversions have been studied because of their wide range of applications, see \cite{Gaik_Book,Haltmeier_2011,Ren_2019,Kuchment_2006,Sharafutdinov_Sobolev_Radon,Webber_Quinto} and references therein. All the generalizations are primarily considered the integral of scalar functions over a variety of subsets (for instance, circles, ellipses, spheres, curves, V-lines, cones, and many other geometries). Recently, some extensions of the Radon transform acting on vector fields have been studied (\cite{Kunyansky_2023,Polyakova_Svetov_normal_Radon_numerical,Polyakova_normal_Radon_transform}) and to the best of authors' knowledge, there is only one work \cite{Polyakova_Svetov_Gen_Rad_tensor_fields} where the Radon transform over tensor fields in $\Rb^3$ is considered. In this work, we introduce a set of generalized Radon transforms acting on symmetric $m$-tensor fields in $\mathbb{R}^n$. In particular, we define longitudinal Radon transform (LRT) and transversal Radon transform (TRT) along with their weighted versions. Our aim here is to give a description of kernels of longitudinal and transversal Radon transforms and reconstruction of a symmetric $m$-tensor field from a combination of its LRT/TRT and weighted LRT/TRT. We note here that our definitions of LRT, TRT, and their weighted versions are natural generalizations of the well-known longitudinal/transverse ray transform, and the corresponding integral moments transforms, respectively. Similar transforms are studied in $\Rb^3$ for vector fields \cite{Kunyansky_2023}. 

The problem of recovering a tensor field in $\Rb^n$ using straight-line versions of the longitudinal, transverse, and integral moment (analogous to weighted versions in our setup) ray transforms has been considered by many research groups, and many references are available on the subject. It is well-known that only the solenoidal part of the unknown tensor field can be reconstructed from its longitudinal ray transform, and hence, one needs additional data to get the full recovery of the tensor fields. Integral moments transforms, and transverse ray transforms are such integral transforms that allow the full recovery of the unknown tensor fields. The interested reader may refer to \cite{UCP_2022,Denisjuk_2006, Lionheart_Naeem_2016,Griesmaier_TRT_2018, Holman_2009,Momentum_ray_transform_2018,Momentum_ray_transform_2_2020,Rohit_2021,Rohit_2024,Sharafutdinov_Novikov_2007,Sharafutdinov_Saint-Venant,Sharafutdinov_2007,Tuy_1983,Vertgeim_2000} for additional details on such problems.

\par Previously, the problem of recovering a 3-D vector field from joint measurements of electromagnetic and acoustic signals was considered in \cite{Kunyansky_2023}. This problem arises in an imaging paradigm called magnetoacoustoelectric tomography, wherein the data are given by the longitudinal Radon transform and the weighted Radon
transforms of the corresponding vector field. Recall that the Radon transform acting on vector fields has a non-trivial kernel, hence it is only possible to recover the vector field partially from the Radon transform data. In order to recover the vector field fully, the authors of \cite{Kunyansky_2023} show that it suffices to consider linearly weighted Radon transforms along with unweighted Radon transforms of such vector fields in $\mathbb{R}^n$. We would also like to note the important contributions of the authors of \cite{Polyakova_normal_Radon_transform}, who have proposed a complementary approach for the inversion of the Radon transform on vector fields using the method of singular value decomposition. Furthermore, the numerical results for the same setup were also provided in \cite{Polyakova_Svetov_normal_Radon_numerical}.
\par Recently, the authors in \cite{Polyakova_Svetov_Gen_Rad_tensor_fields} have studied the kernel of the generalized Radon transforms and provided reconstruction algorithms for vector fields in $\Rb^n$ and symmetric $m$-tensor fields in $\Rb^3$ from the data of longitudinal, transversal and mixed Radon transforms.
Our work can be considered as a generalization of \cite{Kunyansky_2023, Polyakova_Svetov_Gen_Rad_tensor_fields} to symmetric $m$-tensor fields in $\Rb^n$. We start by discussing the kernel of longitudinal and transversal Radon transforms, and it is worth noting that the kernels are as expected from our experience with straight-line transforms of tensor fields. Further, we show that the full tensor field $f$ can be determined from two different sets of data: (1) a combination of longitudinal Radon transform and corresponding weighted longitudinal Radon transform of the tensor field, (2) a combination of transversal Radon transform along with weighted transversal Radon transform of the tensor field.

The rest of the article is organized as follows: In Section \ref{Sec: Definitions}, we fix the notations, define the unweighted and weighted Radon transforms acting on symmetric $m$-tensor fields in $\mathbb{R}^n$, and state the main results of this article. Section \ref{Sec:kernel description} discusses the proof of the kernel description result. Sections \ref{Sec: Reconstruction using LRT and WLRT} and \ref{Sec: Reconstruction using TRT and WTRT} are devoted to proving the main theorems Theorem \ref{th:inversion using longitudinal} and \ref{th:inversion using transversal}.

\section{Definitions and Main results} \label{Sec: Definitions}
In this section, we rigorously define longitudinal and transversal Radon transforms of symmetric $m$-tensor fields in $\Rb^n$. We also make auxiliary definitions for the weighted longitudinal and weighted transversal Radon transforms. We present a decomposition result for symmetric $m$-tensor fields and introduce some operators acting on tensor fields. Further, we discuss some known facts about the classical Radon transform that we will need in the upcoming analysis. Finally, we end the section with statements of our main theorems and remarks about these.

Let $\textit{T}^m(\Rb^n)$ denote the space of $m$-tensor fields defined in $\Rb^n$ and $\mathit{S}^m(\Rb^n)$ be the space of symmetric $m$-tensor fields. Further, the Schwartz space  $\mathcal{S} (\mathit{S}^m(\Rb^n))$ consist of symmetric $m$-tensor fields whose components belong to the Schwartz space $\mathcal{S}(\Rb^n)$ of functions in $\Rb^n$. The Hilbert space $H^s_t(\Rb^n); s \in \Rb, t > -n/2$ is defined to be the completion of $\mathcal{S}(\Rb^n)$ with respect to the following norm (see \cite{Sharafutdinov_Reshetnyak_formula} for a detailed discussion about these spaces):
$$||f||^2_{H^s_t(\Rb^n)} = \int_{\Rb^n} |y|^{2 t} (1 + |y|^2)^{s - t} |\hat{f}(y)|^2 \,dy.$$
When $t = 0$, we get the classical Sobolev space $H^s(\Rb^n)$. We will be mostly working with tensor fields in the space $H^s_t (\mathit{S}^m(\Rb^n))$ whose components lie in the space $H^s_t(\Rb^n)$.

For $\o \in \Sb^{n-1}$, $\o^\perp$ denotes the hyperplane passing through the origin and perpendicular to $\o$. Let $\o_1, \o_2, \dots, \o_{n-1}$ be an orthonormal basis of $\o^\perp$. Thus, the set $\{\o, \o_1, \o_2, \dots, \o_{n-1}\}$ forms an orthonormal frame in $\Rb^n$. Further, each pair $(\o, p) \in \mathbb{S}^{n - 1} \times \Rb$ represents a unique hyperplane $H_{\o, p}$ in $\Rb^n$ that is perpendicular to $\o$ and at a distance $p$ from the origin.

Given real s and $t > -1/2$, the Hilbert space $H^s_t(\Sb^{n-1} \times \Rb)$ is defined as the completion of $\Sc(\Sb^{n-1} \times \Rb)$ with respect to the norm (see \cite{Sharafutdinov_Reshetnyak_formula})
$$||\varphi||^2_{H^s_t(\Sb^{n-1} \times \Rb)} = \frac{1}{2(2\pi)^{n-1}}\int_{\Sb^{n-1}}\int_{\Rb} |\sigma|^{2 t} (1 + |\sigma|^2)^{s - t} |\hat{\varphi}(\o, \sigma)|^2 \,d\sigma\, d\o $$
where $d\o$ is the volume form on $\Sb^{n-1}$ which is induced by the Euclidean metric of $\Rb^n$ and $\hat{\varphi}(\o, p)$ is the one dimensional Fourier transform of $\varphi$ with respect to the variable $p$.

Now, we are ready to define the integral transforms of our interest. We define all the transforms for the Schwartz space $\mathcal{S} (\mathit{S}^m(\Rb^n))$ of symmetric $m$-tensor fields. However, all these definitions can be extended to tensor fields in the space $H^s_t (\mathit{S}^m(\Rb^n))$ by standard density arguments. Note that the Einstein summation convention of summing over repeated indices is used in the following definitions and throughout the article.
\begin{definition}
The \textbf{transversal Radon transform (TRT)} of a symmetric $m$-tensor field $f \in\mathcal{S} (\mathit{S}^m(\Rb^n))$ is defined as:
\begin{align}\label{eq:TRT}
\mathcal{T}^m f (\omega, p) = \int_{H_{\omega,p}} \left< f (x), \omega^{\odot m} \right> \,dx =  \int_{\omega^\perp} \left< f (p\omega + y), \omega^{\odot m} \right> \,dy, 
\end{align}
where $\omega^{\odot m}$ denotes the $m^{th}$ symmetric tensor product of $\omega$ and $\left< f(x), \omega^{\odot m} \right> = f_{i_1\dots i_m}(x) \omega^{i_1} \cdots \omega^{i_m}$.
\end{definition}
\begin{definition}
The \textbf{longitudinal Radon transform (LRT)} of a symmetric $m$-tensor field $f \in \mathcal{S} (\mathit{S}^m(\Rb^n))$ is defined as follows:
\begin{align}
\Lc_{\ell_1 \dots \ell_{n - 1}}^m f (\omega, p) = \int_{H_{\omega,p}} \left<f(x), \o_1^{\odot \ell_1} \odot \dots \odot \o_{n - 1}^{\odot \ell_{n - 1}}\right> \,dx
= \int_{\omega^\perp} \left< f (p\omega + y), \o_1^{\odot \ell_1} \odot \dots \odot \o_{n - 1}^{\odot \ell_{n - 1}} \right> \,dy,
\end{align}
where $\ell_i \geq 0$; $i = 1, \dots, n - 1$ and $\ell_1 + \dots + \ell_{n - 1} = m.$
\end{definition}
\begin{definition}
The \textbf{weighted transversal Radon transform (weighted TRT)} of order $k$, for $1 \leq k \leq m$, of a symmetric $m$-tensor field $f \in \mathcal{S} (\mathit{S}^m(\Rb^n))$ is defined as follows:
\begin{equation}\label{eq:WTRT}
\Tc^{w,k}_{\ell_1 \dots \ell_{n - 1}} f (\omega, p) = \int_{H_{\omega,p}} \left<x, \omega_1\right>^{\ell_1} \dots \left<x, \omega_{n - 1}\right>^{\ell_{n - 1}} \left< f (x), \omega^{\odot m} \right> \,dx
\end{equation}
\noindent such that $\ell_i \geq 0;$ $i = 1 \dots n - 1$ and $\ell_1 + \dots + \ell_{n - 1} = k$.
\end{definition}
\noindent Note that, there are total $ \binom{k + n - 2}{k}$ weighted transversal Radon transforms of order $k$. The weighted TRT can be understood to be a generalization of the well-known moment ray transform by making the following simple observation. For a fixed $\omega=\omega_n$ and for any $ y\in \omega^{\perp}$ we can write, $y = \sum_{j=1}^{n-1}y_j\o_j$. Then it is easy to see that:
\begin{align*}
  \Tc^{w,k}_{\ell_1 \dots \ell_{n - 1}} f (\omega, p) = \int_{\Rb} \int_{\Rb}\dots \int_{\Rb} y_1^{\ell_1}y_2^{\ell_2} \dots y_{n-1}^{\ell_{n - 1}} \left< f (p\omega + y), \omega^{\odot m} \right> \,dy_1\,dy_2\dots \,dy_{n-1}.   
\end{align*}
\begin{definition}
The \textbf{weighted longitudinal Radon transform (weighted LRT)} of order $k$ of a symmetric $m$-tensor field $f \in \mathcal{S} (\mathit{S}^m(\Rb^n))$, where $1\leq k\leq m$ is defined as follows:
\begin{align}
\Lc^{w,k}_{\ell_1 \dots \ell_{n - 1}} f (\omega, p) = \int_{H_{\omega,p}} \left<x, \o_1\right>^{k_1} \dots \left<x, \o_{n - 1}\right>^{k_{n - 1}} \left<f(x), \o_1^{\odot \ell_1 + k_1} \odot \dots \odot \o_{n - 1}^{\odot \ell_{n - 1} + k_{n - 1}}\right> \,dx
\end{align}
such that $k_i \geq 0$, $\ell_i \geq 0$ for $i = 1, \dots, n - 1$. Also, $k_1 + \dots + k_{n - 1} = k$ and $\ell_1 + \dots + \ell_{n - 1} = m - k.$
\end{definition}
\noindent Similar to our description for weighted TRT as above, we can rewrite the weighted LRT transform as follows:
\begin{align*}
\Lc^{w,k}_{\ell_1 \dots \ell_{n - 1}} f (\omega, p) = \int_{\Rb} \int_{\Rb}\dots \int_{\Rb} y_1^{k_1}y_2^{k_2} \dots y_{n-1}^{k_{n - 1}} \left<f(p\o+ y), \o_1^{\odot \ell_1 + k_1} \odot \dots \odot \o_{n - 1}^{\odot \ell_{n - 1} + k_{n - 1}}\right> \,dy_1\,dy_2\dots \,dy_{n-1}.
\end{align*}
For any given $k$ there are in total $ \binom{k + n - 2}{k} \times \binom{m - k + n - 2}{m - k}$ weighted longitudinal Radon transforms of order $k$. When $n=2$, the above definition of the weighted LRT reduces to the definition for the $k$-th integral moment of the longitudinal ray transform of a symmetric $m$-tensor field.

Now, we briefly discuss about the classical Radon transform of scalar fields. The \textbf{classical Radon transform} $\Rc : \mathcal{S}(\Rb^n) \rightarrow \mathcal{S}(\mathbb{S}^{n - 1} \times \Rb)$ of a scalar function $f \in \mathcal{S}(\Rb^n)$ is defined as follows:
\[f^\wedge(\omega,p) = \Rc f(\o,p) = \int_{H_{\omega,p}}f(x)\,ds\]
where $ds$ is the standard volume element on hyperplane $H_{\omega,p}$. The Radon transform extends uniquely to the bijective isometry of Hilbert spaces, see \cite[Theorem 2.1]{Sharafutdinov_Reshetnyak_formula}
$$\mathcal{R}: H^s_t(\Rb^n) \rightarrow H^{s + (n - 1)/2}_{t + (n - 1)/2, e}(\mathbb{S}^{n - 1} \times \Rb)$$ 
which means we can recover a function $f\in H^s_t(\Rb^n)$ from its Radon transform. Here, $H^{s + (n - 1)/2}_{t + (n - 1)/2, e}$ is a subspace of $H^{s + (n - 1)/2}_{t + (n - 1)/2}$ consisting of functions $\varphi$ that satisfy $\varphi (-\o, -p) = \varphi (\o, p)$. Also, the following property of the Radon transform is handy for our analysis: 
\begin{equation} \label{eq:property of Radon transform}
    \left(a_i\frac{\partial}{\partial x^i}f(x)\right)^\wedge(\omega,p) = \left<\omega,a\right> \frac{\partial}{\partial p} f^\wedge(\omega,p).
\end{equation}
For a symmetric $m$-tensor field $f \in \mathcal{S}(\mathit{S}^m(\Rb^n))$, the componentwise Radon transform of $f$ is again a symmetric $m$-tensor field denoted by $\overline{\Rc}f$ and is defined as follows:
$$(\overline{\Rc}f)_{i_1 \dots i_m} = \Rc (f_{i_1 \dots i_m}).$$

In the remaining section, we introduce some operators acting on tensor fields and their crucial properties, please refer \cite{Sharafutdinov_Book, Sharafutdinov_Reshetnyak_formula} for more details:
\begin{itemize}
\item The natural projection $\sigma : \textit{T}^m(\Rb^n) \rightarrow \mathit{S}^m(\Rb^n)$, which is also known as the  \textit{symmmetrization operator}, is defined as follows:
\begin{align*}
    \sigma (1, \dots, m) u (x_1, \dots, x_m) = \frac{1}{m!} \sum_{\pi \in {\Pi_m}} u (x_{\pi(1)}, \dots, x_{\pi(m)}),
\end{align*}
where $\Pi_m$ denotes the permutations group on the set $\left\{1, \dots, m\right\}$.
\item The operator of \textit{symmetric multiplication} $i_x : \mathit{S}^{m}(\Rb^n) \rightarrow \mathit{S}^{m + 1}(\Rb^n)$ is defined as follows:
\begin{equation*}
    (i_x u)_{i_1 \dots i_{m + 1}} = \sigma (i_1, \dots, i_{m + 1}) x_{i_{m + 1}} u_{i_m \dots i_m}.
\end{equation*}
\item The operator of \textit{contraction} $j_x : \mathit{S}^{m}(\Rb^n) \rightarrow \mathit{S}^{m - 1}(\Rb^n)$ is defined as follows:
\begin{equation*}
    (j_x u)_{i_1 \dots i_{m - 1}} = x^{i_m} u_{i_1 \dots i_m}.
\end{equation*}
\item The operator of \textit{inner differentiation} or symmetrized covariant derivative $\D : \mathcal{S} (\mathit{S}^{m}(\Rb^n)) \rightarrow \mathcal{S} (\mathit{S}^{m + 1}(\Rb^n))$ is defined as follows:
\begin{equation*}
    \left(\D u\right) _{i_1 \dots i_{m + 1}} = \sigma (i_1, \dots, i_{m + 1}) \frac{\partial  u_{i_1 \dots i_m}}{\partial x^{i_{m + 1}}}.
\end{equation*}
\item The \textit{divergence operator} $\delta : \mathcal{S} (\mathit{S}^{m}(\Rb^n)) \rightarrow \mathcal{S} (\mathit{S}^{m - 1}(\Rb^n))$ is defined as follows:
\begin{equation*}
    \left(\delta u\right) _{i_1 \dots i_{m - 1}} = \frac{\partial}{\partial x^{i}} u_{i_1 \dots i_{m - 1} i}.
\end{equation*}
\end{itemize}
Next, we discuss some crucial properties of the differential operator $\displaystyle D_j := -i \frac{\partial}{\partial x^j}$ and a result about the solvability of an elliptic system of equations in $H^s_t$ space setting that will be used repeatedly throughout the article: 
\begin{lemma}\cite[Lemma 3.3]{Sharafutdinov_Reshetnyak_formula}
    Every partial derivative $\displaystyle D_j : \mathcal{S}(\Rb^n) \rightarrow \mathcal{S}(\Rb^n)$ uniquely extends to the bounded operator $$D_j : H^{s + 1}_{t + 1}(\Rb^n) \rightarrow H^s_t(\Rb^n) \quad (s \in \Rb, t > -n/2).$$
\end{lemma}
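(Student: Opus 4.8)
The plan is to reduce the statement to a one-line estimate on the Fourier side together with a density argument. First I would use that, for $f\in\mathcal{S}(\mathbb{R}^n)$, the Fourier transform (in the convention $\hat f(y)=\int f(x)e^{-\I x\cdot y}\,dx$) intertwines $D_j=-\I\,\partial/\partial x^j$ with multiplication by the $j$-th coordinate function, namely $\widehat{D_j f}(y)=y_j\,\hat f(y)$. Substituting this into the defining norm of $H^s_t(\mathbb{R}^n)$ yields
$$\|D_j f\|^2_{H^s_t(\mathbb{R}^n)}=\int_{\mathbb{R}^n}|y|^{2t}\,(1+|y|^2)^{s-t}\,|y_j|^2\,|\hat f(y)|^2\,dy .$$

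Next I would write out the norm of $f$ in the source space. Simplifying the exponents, $\|f\|^2_{H^{s+1}_{t+1}(\mathbb{R}^n)}=\int_{\mathbb{R}^n}|y|^{2t+2}\,(1+|y|^2)^{s-t}\,|\hat f(y)|^2\,dy$, so the two integrands differ only by the factors $|y_j|^2$ and $|y|^2$ in front of the same weight $|y|^{2t}(1+|y|^2)^{s-t}$. Since $|y_j|^2\le|y|^2$ pointwise, we get $\|D_j f\|_{H^s_t(\mathbb{R}^n)}\le\|f\|_{H^{s+1}_{t+1}(\mathbb{R}^n)}$ for every $f\in\mathcal{S}(\mathbb{R}^n)$, in fact with operator-norm constant $1$. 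Here the hypothesis $t>-n/2$ is used precisely to ensure the target space $H^s_t(\mathbb{R}^n)$ is well defined, i.e.\ the weight $|y|^{2t}$ is locally integrable at the origin; the corresponding condition $t+1>-n/2$ for the source space then holds automatically.

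Finally, since by definition $H^{s+1}_{t+1}(\mathbb{R}^n)$ is the completion of $\mathcal{S}(\mathbb{R}^n)$ in its norm and $H^s_t(\mathbb{R}^n)$ is complete, the bounded linear map $D_j:\mathcal{S}(\mathbb{R}^n)\to H^s_t(\mathbb{R}^n)$ extends, by the bounded-linear-transformation theorem, uniquely to a bounded operator on all of $H^{s+1}_{t+1}(\mathbb{R}^n)$ with the same norm bound. I do not expect a genuine obstacle: the only points needing a little care are keeping track of the weights and exponents when passing between the two scales $H^{s+1}_{t+1}$ and $H^s_t$, and noting that the density of Schwartz functions — which is built into the definition of these spaces — is exactly what makes the extension both possible and unique. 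If desired, one can add that the extension coincides with the distributional derivative, which follows from continuity of the Fourier transform on tempered distributions.
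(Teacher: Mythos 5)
Your proof is correct: the identity $\widehat{D_j f}(y)=y_j\hat f(y)$, the pointwise bound $|y_j|\le|y|$ turning the weight $|y|^{2t+2}(1+|y|^2)^{s-t}$ of $H^{s+1}_{t+1}$ into that of $H^s_t$, and the unique extension by density of $\mathcal{S}(\Rb^n)$ in the completion give exactly the claimed bounded operator (with norm $\le 1$). The paper itself offers no proof, citing the lemma from Sharafutdinov, and your Fourier-multiplier-plus-density argument is precisely the standard proof of that cited result, so there is nothing of substance to compare beyond noting the agreement.
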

\begin{theorem}\label{th:solvability of elliptic system}
For $ k \geq 0$ and $h \in H^s_t(\mathit{S^\ell}(\Rb^n))$, there exists a unique tensor field $v \in H^{s + 2k}_{t + 2k}(\mathit{S^\ell}(\Rb^n))$  that satisfies $$\delta^k \D^k v = h.$$    
\end{theorem}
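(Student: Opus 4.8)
The plan is to solve the equation $\delta^k\D^k v = h$ by Fourier analysis, exploiting that $\delta^k\D^k$ is, after Fourier transform, multiplication by a matrix-valued polynomial symbol on $S^\ell(\Rb^n)$ that is invertible away from the origin and homogeneous of degree $2k$. First I would compute this symbol. Since $\partial_j$ corresponds to multiplication by $iy_j$ under the Fourier transform, the inner differentiation $\D$ corresponds to $i\,i_y$ (symmetric multiplication by $y$) and the divergence $\delta$ to $i\,j_y$ (contraction with $y$); composing, $\delta^k\D^k$ corresponds to multiplication by $P(y):=(-1)^k (j_y)^k(i_y)^k$ acting on $S^\ell(\Rb^n)$, which is homogeneous of degree $2k$. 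Using that $j_y$ is the adjoint of $i_y$ with respect to the natural inner product on symmetric tensors, one has $\langle P(y)a,a\rangle = (-1)^k\|(i_y)^k a\|^2$ for $a\in S^\ell(\Rb^n)$; and since, viewing symmetric tensors as homogeneous polynomials, $(i_y)^k$ is multiplication by the linear form $\langle y,\cdot\rangle^k$ and hence injective for $y\neq 0$, the matrix $P(y)$ is definite (up to the sign $(-1)^k$), in particular invertible, for every $y\neq 0$. By homogeneity, $\|P(y)^{-1}\|_{\mathrm{op}} = C_0\,|y|^{-2k}$ with $C_0:=\max_{|\omega|=1}\|P(\omega)^{-1}\|_{\mathrm{op}}<\infty$, and $y\mapsto P(y)^{-1}$ is smooth on $\Rb^n\setminus\{0\}$.

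Next I would define the candidate solution $v := \mathcal{F}^{-1}\bigl(P(\cdot)^{-1}\hat h(\cdot)\bigr)$, which takes values in $S^\ell(\Rb^n)$ because $P(y)^{-1}$ maps $S^\ell(\Rb^n)$ to itself. Its $H^{s+2k}_{t+2k}$-norm is controlled directly: in the defining integral the weight becomes $|y|^{2(t+2k)}(1+|y|^2)^{s-t}\|P(y)^{-1}\hat h(y)\|^2 \le C_0^2\,|y|^{2t}(1+|y|^2)^{s-t}\|\hat h(y)\|^2$, since the factor $|y|^{4k}$ from the weight exactly cancels the $|y|^{-4k}$ from $\|P(y)^{-1}\|^2$; integrating gives $\|v\|_{H^{s+2k}_{t+2k}}\le C_0\|h\|_{H^s_t}<\infty$. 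No new singularity at the origin is created, so the only restriction needed is $t>-n/2$ (hence $t+2k>-n/2$), which is already assumed.

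Then I would verify that $v$ solves the equation and is unique. Iterating the preceding lemma on the boundedness of $D_j$ shows that $\delta^k\D^k$ extends to a bounded operator $H^{s+2k}_{t+2k}(S^\ell(\Rb^n))\to H^s_t(S^\ell(\Rb^n))$ whose Fourier representation on Schwartz fields — hence, by density, on the whole space — is multiplication by $P(y)$; therefore $\widehat{\delta^k\D^k v}(y) = P(y)P(y)^{-1}\hat h(y) = \hat h(y)$, i.e.\ $\delta^k\D^k v = h$. For uniqueness, if $\delta^k\D^k v = 0$ then $P(y)\hat v(y)=0$ for a.e.\ $y$, and since $P(y)$ is invertible for $y\neq 0$ this forces $\hat v = 0$ a.e., so $v=0$.

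The main obstacle — really the only nonroutine point — is the algebraic ellipticity statement: that the symbol $(j_y)^k(i_y)^k$ is (up to sign) positive definite on $S^\ell(\Rb^n)$ for $y\neq 0$, equivalently that symmetric multiplication $(i_y)^k$ is injective there, and then keeping careful track of homogeneity so that the $H^s_t\to H^{s+2k}_{t+2k}$ mapping property falls out with the exponents cancelling exactly; the remainder is a standard Fourier-multiplier argument on the weighted Sobolev scale.
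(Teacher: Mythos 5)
Your proposal is correct, and its skeleton is the same as the paper's: Fourier transform the equation so that $\delta^k\D^k$ becomes multiplication by the symbol $j_y^k i_y^k$ on symmetric $\ell$-tensors, invert that symbol away from the origin, and read off existence, uniqueness, and the $H^s_t\to H^{s+2k}_{t+2k}$ mapping property from the homogeneity of degree $-2k$ of the inverse. Where you differ is in how the crucial ellipticity fact is justified. The paper does not prove invertibility of the symbol directly: it reduces to the case $k=1$ proved in \cite[Lemma 3.4]{Sharafutdinov_Reshetnyak_formula} and sketches, via the commutation identity \eqref{eq: for solenoidal fields}, a pointwise inequality $|v|\le C|y|^{-2k}|j_y^k i_y^k v|$ from which the general case follows by repeating Sharafutdinov's steps. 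You instead give a self-contained algebraic argument: since $j_y$ is the adjoint of $i_y$ for the natural inner product on symmetric tensors, $\langle j_y^k i_y^k a,a\rangle=\|i_y^k a\|^2$, and injectivity of $i_y^k$ for $y\neq 0$ (multiplication by the nonzero linear form $\langle y,\cdot\rangle^k$ under the polynomial identification) makes the symbol positive definite up to the factor $(-1)^k$ you track; the quantitative bound $\|P(y)^{-1}\|\le C|y|^{-2k}$ then comes from homogeneity and compactness of the sphere, playing exactly the role of the paper's pointwise inequality. Your route buys a cleaner, reference-free proof of the key estimate and makes the exact cancellation of the weights in the $H^{s+2k}_{t+2k}$ norm transparent; the paper's route is shorter on the page because it leans on the cited lemma, at the cost of leaving the inequality for general $k$ only sketched. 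Two cosmetic points: the bound should be stated as $\|P(y)^{-1}\|\le C_0|y|^{-2k}$ rather than an equality, and in the uniqueness step one should note that elements of these weighted spaces are determined by their (a.e.-defined) Fourier transforms, which is how the spaces are used throughout the paper.
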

\begin{proof}
    The Fourier transform of a tensor field is defined componentwise. Let $\hat{v}$ and $\hat{h}$ denote the Fourier transform of $v$ and $h$ respectively. Then using the properties of the Fourier transform, we get
    \begin{align*}
        \delta^k \D^k v &= h\\
        \implies j_x^k i_x^k \hat{v} &= \hat{h} \quad \mbox{($i_x$ and $j_x$ are defined above).}
    \end{align*}
    If the matrix formed by the entries of $j_x^k i_x^k$ is invertible, the $\hat{v}$ can be recovered, which further gives $v$ by Fourier inversion. In order to accomplish this, it is sufficient to prove that $\delta^k d^k v = 0$ implies $v = 0$. This has been proved for $k = 1$ in \cite[Lemma 3.4]{Sharafutdinov_Reshetnyak_formula}. To prove this for arbitrary $k$, an inequality of the form 
    $$|v| \leq C |x|^{-2k} |j_x^ki_x^k v|; \quad C \mbox{ is some constant},$$ 
    which is similar to \cite[Equation 3.4]{Sharafutdinov_Reshetnyak_formula}, can be derived using Equation \eqref{eq: for solenoidal fields}. Using this inequality, the result can be obtained for arbitrary $k$ following the steps from \cite[Lemma 3.4]{Sharafutdinov_Reshetnyak_formula}.
\end{proof}

\noindent Next, we state a theorem about the decomposition of a symmetric $m$-tensor field $f \in H^s_t(\textit{S}^m(\Rb^n))$.
\begin{theorem}\label{Decomposition Result}
    For any tensor field $f \in H^s_t(S^m(\mathbb{R}^n)) (s \in \mathbb{R}, t > -n/2, m \geq 0)$, there exist uniquely defined $v_0, \dots , v_m$ with $v_i \in H^{s + i}_{t + i}(S^{m-i}(\mathbb{R}^n))$ for $i = 0,1, \dots, m$ such that
    \begin{align}\label{eq:decomposition of f}
        f = \sum_{i=0}^m \D^iv_i, \quad \mbox{ with }  v_{i} \text{ solenoidal for } 0\leq i \leq  m-1.
    \end{align}
    The estimate 
    \begin{equation*}
        ||v_i||_{H^{s + i}_{t + i}(\mathbb{R}^n)} \leq C ||f||_{H^s_t(\mathbb{R}^n)}; \quad i= 0, \dots, m
    \end{equation*} holds for a constant $C$ independent of $f$. The tensor field $v_0$ is known as \textbf{the solenoidal part of $f$}, and the remaining \textbf{$\displaystyle \left(\D\sum_{i=1}^m \D^{i-1}v_i\right)$ is the potential part of $f$.}
\end{theorem}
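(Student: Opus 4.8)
The plan is to argue by induction on the tensor order $m$, the essential content being a single \textbf{Helmholtz-type} splitting $f = v_0 + \D g$ of $f$ into its solenoidal part and a potential, which itself follows directly from the elliptic solvability result Theorem \ref{th:solvability of elliptic system}. The base case $m=0$ is immediate: a scalar field is its own solenoidal part, the decomposition is trivial and unique, and the estimate holds with $C=1$. So assume the statement holds for order $m-1$ and let $f \in H^s_t(S^m(\Rb^n))$.

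First I would construct $g$ so that $v_0 := f - \D g$ is solenoidal. Applying $\delta$, the requirement $\delta v_0 = 0$ is equivalent to $\delta\D g = \delta f$. Since $\delta$ is assembled from the operators $D_j$ (bounded $H^{s+1}_{t+1}\to H^s_t$ by the lemma quoted above), $\delta f$ lies in $H^{s-1}_{t-1}(S^{m-1}(\Rb^n))$, so Theorem \ref{th:solvability of elliptic system} with $k=1$, $\ell=m-1$ produces a unique $g \in H^{s+1}_{t+1}(S^{m-1}(\Rb^n))$ solving $\delta\D g = \delta f$. Setting $v_0 := f - \D g$ and using boundedness of $\D : H^{s+1}_{t+1} \to H^s_t$ places $v_0$ in $H^s_t(S^m(\Rb^n))$, and by construction $\delta v_0 = 0$. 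Applying the induction hypothesis to $g$ gives fields $w_0,\dots,w_{m-1}$, solenoidal for indices $\le m-2$, with $w_i \in H^{s+1+i}_{t+1+i}(S^{m-1-i}(\Rb^n))$ and $g = \sum_{i=0}^{m-1}\D^i w_i$, whence
\[
 f \;=\; v_0 + \D g \;=\; v_0 + \sum_{i=0}^{m-1}\D^{i+1}w_i \;=\; \sum_{j=0}^{m}\D^j v_j, \qquad v_j := w_{j-1}\ \ (1\le j\le m),
\]
which is of the asserted form, with the claimed regularity and with $v_j$ solenoidal for $0\le j\le m-1$.

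Uniqueness follows by running the reduction backwards. If $f=\sum_{i=0}^m\D^i v_i$ is any decomposition of the stated type, set $g:=\sum_{i=1}^m\D^{i-1}v_i \in H^{s+1}_{t+1}(S^{m-1}(\Rb^n))$, so that $f=v_0+\D g$; applying $\delta$ and using $\delta v_0=0$ forces $\delta\D g=\delta f$, and the uniqueness clause of Theorem \ref{th:solvability of elliptic system} then determines $g$, hence $v_0=f-\D g$, uniquely. Since $g$ is itself a decomposition of the same type for order $m-1$, the induction hypothesis pins down $v_1,\dots,v_m$. The estimates propagate along the same chain: the solution operator in Theorem \ref{th:solvability of elliptic system} is bounded (by its explicit Fourier construction, or by the closed graph theorem), so $\|g\|_{H^{s+1}_{t+1}} \le C\|\delta f\|_{H^{s-1}_{t-1}} \le C'\|f\|_{H^s_t}$, then $\|v_0\|_{H^s_t} \le \|f\|_{H^s_t} + \|\D g\|_{H^s_t} \le C''\|f\|_{H^s_t}$, and the bounds on $v_1,\dots,v_m$ come from the inductive estimates for $g$.

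The step I expect to demand the most care is the bookkeeping of the weighted Sobolev indices, specifically the very first application of $\delta$: one must check that $\delta f$ genuinely belongs to a space on which Theorem \ref{th:solvability of elliptic system} can be invoked even when $t$ is close to $-n/2$ (where $H^{s-1}_{t-1}$ is borderline), using that $\widehat{\delta f}$ vanishes to first order at the origin to absorb the extra weight; this is the kind of endpoint subtlety handled carefully in \cite{Sharafutdinov_Reshetnyak_formula}, and after the first level the weights only increase, so no further difficulty arises. It is worth noting that injectivity of $\D$ on these spaces (equivalently, the absence of nonzero Killing-type polynomial tensors in $H^{s+1}_{t+1}$) is not needed as a separate ingredient, since uniqueness of $g$ is obtained from the elliptic equation $\delta\D g=\delta f$ rather than from the relation $\D g=f-v_0$.
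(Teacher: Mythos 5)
Your proof is correct and follows essentially the same route as the paper: the paper's proof is simply to iterate the one-step solenoidal--potential decomposition of Sharafutdinov (Theorem 3.5 of the cited Reshetnyak-formula paper), and your induction is exactly that iteration, with the single step $f = v_0 + \D g$ re-derived from Theorem \ref{th:solvability of elliptic system} (with $k=1$, solving $\delta\D g = \delta f$) rather than quoted. The endpoint subtlety you flag --- that $\delta f$ formally lands in $H^{s-1}_{t-1}$ where $t-1$ may be $\leq -n/2$ --- is exactly what Sharafutdinov's direct Fourier construction of $g$ from $f$ circumvents, so your deferral to that treatment is appropriate and does not constitute a gap.
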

\begin{proof}
    The proof directly follows by iterating the result in \cite[Theorem 3.5]{Sharafutdinov_Reshetnyak_formula}.
\end{proof}
Note to recover a symmetric $m$-tensor field, it is sufficient to recover the components $v_i$, $0\leq i \leq m$, coming from the above decomposition. This is what we claim in Theorems \ref{th:inversion using longitudinal} and \ref{th:inversion using transversal} below, which are the main results of our work. But before that, we give kernel description for LRT and TRT in the following theorem.

\begin{theorem}\label{th:kernel description}[Kernel description]
For $s \in \mathbb{R}$ and $t > -\frac{n}{2}$, a symmetric $m$-tensor field $f \in H^s_t(S^m(\Rb^n))$ satisfy 
\begin{enumerate}
    \item[(a)] $\Lc_{\ell_1 \dots \ell_{n - 1}}^m f =0 $ if and only if $f = \D v$ for some $v \in H^{s+1}_{t+1}(S^m(\Rb^n)).$
    \item[(b)] $\Tc^m f = 0$ if and only if $f =  \sum_{i=0}^{m-1} \D^iv_i$, for  $v_i \in H^{s+i}_{t+i}(S^m(\Rb^n))$ and each $v_i$ is divergence-free. 
\end{enumerate}
\end{theorem}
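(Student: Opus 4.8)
The plan is to reduce both statements to the componentwise classical Radon transform $\overline{\Rc}$, using two elementary observations. First, because the symmetrization in $\D$ is invisible against a symmetric tensor $a^{\odot k}$, one has $\langle \D u(x), a^{\odot(k+1)}\rangle = (a^i\partial_i)\,\langle u(x), a^{\odot k}\rangle$ for every constant vector $a$ and $u\in\Sc(S^k(\Rb^n))$; applying $\overline{\Rc}$ componentwise and using the intertwining identity \eqref{eq:property of Radon transform} gives the key formula $\langle \overline{\Rc}(\D u)(\o,p), a^{\odot(k+1)}\rangle = \langle\o,a\rangle\,\partial_p\,\langle \overline{\Rc}u(\o,p), a^{\odot k}\rangle$, which iterates to $\langle \overline{\Rc}(\D^i u)(\o,p), a^{\odot m}\rangle = \langle\o,a\rangle^i\,\partial_p^i\,\langle \overline{\Rc}u(\o,p), a^{\odot(m-i)}\rangle$ for $u\in\Sc(S^{m-i}(\Rb^n))$; in particular, taking $a=\o$, $\Tc^m(\D^i u)=\partial_p^i\,\Tc^{m-i}u$. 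Second, if $w\in S^k(\Rb^n)$ with $k\geq 1$ is solenoidal, then applying $\overline{\Rc}$ to $\delta w=0$ and using \eqref{eq:property of Radon transform} yields $\partial_p\big(j_\o\overline{\Rc}w(\o,p)\big)=0$, so $j_\o\overline{\Rc}w(\o,p)\equiv 0$ by decay in $p$, and contracting further with $\o$ gives $\Tc^k w\equiv 0$. I would also record the dictionary identities $\Tc^m f(\o,p)=\langle\overline{\Rc}f(\o,p),\o^{\odot m}\rangle$ and the fact that, writing a general $\xi\in\o^\perp$ in the frame $\o_1,\dots,\o_{n-1}$, the quantities $\Lc^m_{\ell_1\dots\ell_{n-1}}f(\o,p)$ are, up to positive combinatorial constants, the coefficients of the degree-$m$ homogeneous polynomial $\xi\mapsto\langle\overline{\Rc}f(\o,p),\xi^{\odot m}\rangle$ on $\o^\perp$. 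Hence ``$\Lc^m f=0$'' (for all admissible $\ell$) is equivalent to $\langle\overline{\Rc}f(\o,p),\xi^{\odot m}\rangle=0$ for all $(\o,p)$ and all $\xi\in\o^\perp$, independently of the chosen orthonormal frame.

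With these tools the ``if'' directions are immediate. For (a): if $f=\D v$, then for $\xi\in\o^\perp$ the factor $\langle\o,\xi\rangle$ in the key formula vanishes, so $\langle\overline{\Rc}f(\o,p),\xi^{\odot m}\rangle=0$. For (b): if $f=\sum_{i=0}^{m-1}\D^i v_i$ with each $v_i$ divergence-free, then $\Tc^m f=\sum_{i=0}^{m-1}\partial_p^i\,\Tc^{m-i}v_i=0$ since each $v_i$ has degree $m-i\geq 1$. For the converse directions I would apply the decomposition $f=\sum_{i=0}^m\D^i v_i$ with $v_i\in H^{s+i}_{t+i}(S^{m-i}(\Rb^n))$ solenoidal for $i\leq m-1$ (Theorem \ref{Decomposition Result}). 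In case (b), $\Tc^m f=\sum_{i=0}^m\partial_p^i\,\Tc^{m-i}v_i$, and the terms with $0\leq i\leq m-1$ vanish (each $v_i$ is solenoidal of degree $m-i\geq 1$), leaving $\Tc^m f=\partial_p^m\,\Rc v_m$; so $\Tc^m f=0$ forces $\partial_p^m\Rc v_m=0$, hence $\Rc v_m=0$ since $\Rc v_m$ decays in $p$, hence $v_m=0$ by injectivity of $\Rc$, i.e.\ $f=\sum_{i=0}^{m-1}\D^i v_i$ with each $v_i$ divergence-free. In case (a), the iterated key formula gives $\langle\overline{\Rc}(\D^i v_i)(\o,p),\xi^{\odot m}\rangle=0$ for $\xi\in\o^\perp$ and $i\geq 1$, so $\Lc^m f=0$ collapses to $\langle\overline{\Rc}v_0(\o,p),\xi^{\odot m}\rangle=0$ for all $\xi\in\o^\perp$; solenoidality of $v_0$ also gives $j_\o\overline{\Rc}v_0(\o,p)\equiv 0$.

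The heart of case (a) is then the pointwise statement: a symmetric $T\in S^m(\Rb^n)$ with $j_\o T=0$ and $\langle T,\xi^{\odot m}\rangle=0$ for all $\xi\in\o^\perp$ must be $0$. In an orthonormal frame with $\o=e_1$ this is transparent: $j_\o T=0$ forces every component of $T$ carrying an index $1$ to vanish, so $T$ is a symmetric tensor on $\o^\perp$, and the vanishing of the polynomial $\xi\mapsto\langle T,\xi^{\odot m}\rangle$ on $\o^\perp$ then forces its remaining components to vanish. Applying this with $T=\overline{\Rc}v_0(\o,p)$ at each $(\o,p)$ gives $\overline{\Rc}v_0\equiv 0$, hence $v_0=0$ by injectivity of the componentwise Radon transform; therefore $f=\D v$ with $v=\sum_{i=1}^m\D^{i-1}v_i\in H^{s+1}_{t+1}(S^{m-1}(\Rb^n))$, the regularity following since each $\D$ lowers the $H^s_t$-indices by one. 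I expect the only delicate points to be bookkeeping: keeping the tensor degrees and the $H^s_t$-indices consistent through the decomposition and the factored-out $\D$, and justifying ``$\partial_p(\cdot)=0$ together with decay in $p$ implies $(\cdot)\equiv 0$'' in the $H^s_t$ category (by density from the Schwartz class). The conceptual content is just the intertwining identity above together with the observation that $\delta w=0$ says precisely that $\overline{\Rc}w(\o,p)$ lies, pointwise, on the hyperplane $\o^\perp$.
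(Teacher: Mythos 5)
Your proposal is correct, and its ``if'' directions are essentially the paper's: your iterated intertwining identity is the paper's claim (B) built on property \eqref{eq:property of Radon transform}, and your observation that $\delta w=0$ forces $j_\o\overline{\Rc}w\equiv 0$ is exactly the content of the paper's claim (A) --- except that you derive it directly from \eqref{eq:property of Radon transform}, where the paper instead reduces to a divergence-free vector field and cites \cite[Proposition 4]{Kunyansky_2023}. The genuine divergence is in the converse directions. The paper does not argue them inside the kernel theorem at all: the converse of (a) is delegated to the recovery of $v_0$ from the LRT data (Section \ref{subsec: inversion of LRT}), and the converse of (b) to the recovery of $v_m$ from $\Tc^m f$ via Lemma \ref{lem: Radon of divergence} ($\Rc(\delta^m\D^m v_m)=\partial_p^m\Tc^m f$) together with the elliptic solvability Theorem \ref{th:solvability of elliptic system}. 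You instead give self-contained arguments: for (a), the pointwise linear-algebra fact that a symmetric $T$ with $j_\o T=0$ and $\langle T,\xi^{\odot m}\rangle=0$ for all $\xi\in\o^\perp$ vanishes (applied to $T=\overline{\Rc}v_0(\o,p)$), which is a cleaner, polarization-based version of the paper's frame expansion \eqref{eq: componentwise Radon transform}; for (b), the identity $\Tc^m f=\partial_p^m\Rc v_m$ plus injectivity of $\Rc$, which avoids Theorem \ref{th:solvability of elliptic system} altogether. What the paper's route buys is that the converses come for free from reconstruction results it needs anyway, and it never has to invoke the step ``$\partial_p^m\varphi=0$ and $\varphi$ decays $\Rightarrow\varphi=0$''; what your route buys is a shorter, more elementary and logically independent kernel theorem. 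Two small points to tighten: justify the step $\partial_p^m(\Rc v_m)=0\Rightarrow\Rc v_m=0$ not by ``density'' but on the Fourier side (in $H^{s'}_{t'}(\Sb^{n-1}\times\Rb)$ the relation becomes $\sigma^m\widehat{\Rc v_m}(\o,\sigma)=0$ a.e., hence $\widehat{\Rc v_m}=0$, since these spaces contain no nonzero $p$-polynomials), and note that extending $j_\o\overline{\Rc}w\equiv0$ from solenoidal Schwartz fields to solenoidal fields in $H^s_t$ needs a word (continuity of $w\mapsto\overline{\Rc}\,\delta w$ on these scales), an issue the paper glosses over in the same way. Also, your corrected degrees $v\in H^{s+1}_{t+1}(S^{m-1}(\Rb^n))$ and $v_i\in H^{s+i}_{t+i}(S^{m-i}(\Rb^n))$ are the intended ones; the statement's $S^m$ is a typo.
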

\begin{theorem}\label{th:inversion using longitudinal}[Recovery of $f$ from LRT and weighted LRT data]
For $s \in \mathbb{R}$ and $t > -\frac{n}{2}$, a symmetric $m$-tensor field $f \in H^s_t(S^m(\Rb^n))$ can be reconstructed uniquely from a combination of its longitudinal and weighted longitudinal Radon transform. We show that the solenoidal part $v_0$ (as in the above decomposition Theorem \ref{Decomposition Result}) is uniquely determined from its longitudinal Radon transform $\Lc_{\ell_1 \dots \ell_{n - 1}}^m f$, and the remaining potential part $v_k$, $k = 1, \dots, m$, can be reconstructed from weighted longitudinal Radon transform of order $k$, $\Lc_{\ell_1 \dots \ell_{n - 1}}^{w,k} f$. 
\end{theorem}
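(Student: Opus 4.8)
The plan is to recover the solenoidal part $v_0$ from the unweighted LRT and then to peel off $v_1,\dots,v_m$ one order at a time from the weighted LRTs by induction on the order $k$. First, writing $f=\sum_{i=0}^m\D^i v_i$ and applying the componentwise Radon transform, iteration of \eqref{eq:property of Radon transform} gives $\overline{\Rc}(\D^i v_i)=i_\omega^i\,\partial_p^i\,\overline{\Rc}v_i$, where $i_\omega$ denotes symmetric multiplication by $\omega$, and dually $\overline{\Rc}(\delta u)=j_\omega\,\partial_p\,\overline{\Rc}u$. Since $\langle i_\omega w,\,u_1\odot\cdots\odot u_r\rangle=0$ whenever all $u_a\perp\omega$, contracting $\overline{\Rc}(\D^i v_i)$ against the tangential monomials $\omega_1^{\odot\ell_1}\odot\cdots\odot\omega_{n-1}^{\odot\ell_{n-1}}$ (with $\ell_1+\cdots+\ell_{n-1}=m$) kills every term with $i\ge 1$, so $\Lc^m_{\ell_1\dots\ell_{n-1}}f(\omega,p)=\langle\overline{\Rc}v_0(\omega,p),\,\omega_1^{\odot\ell_1}\odot\cdots\odot\omega_{n-1}^{\odot\ell_{n-1}}\rangle$; that is, the LRT determines exactly the components of $\overline{\Rc}v_0(\omega,p)$ carrying no index along $\omega$ in the frame $\{\omega,\omega_1,\dots,\omega_{n-1}\}$. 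On the other hand $\delta v_0=0$ forces $j_\omega\partial_p\overline{\Rc}v_0=0$, so $j_\omega\overline{\Rc}v_0$ is independent of $p$ and hence vanishes (argue on $\Sc(S^m(\Rb^n))$, where Radon transforms decay in $p$, then extend by density); equivalently every component of $\overline{\Rc}v_0$ with an index along $\omega$ is zero. Thus the LRT data recover $\overline{\Rc}v_0$ in full, and the componentwise Radon isometry $\Rc:H^s_t(\Rb^n)\to H^{s+(n-1)/2}_{t+(n-1)/2,e}(\Sb^{n-1}\times\Rb)$ reconstructs $v_0\in H^s_t(S^m(\Rb^n))$ with the claimed stability; this also re-proves the forward direction of Theorem \ref{th:kernel description}(a).

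For the inductive step, assume $v_0,\dots,v_{k-1}$ are known. Fix $\omega$, write $x=p\omega+y$ with $y=\sum_j y_j\omega_j$, and set $\Theta(\omega)=\omega_1^{\odot\ell_1+k_1}\odot\cdots\odot\omega_{n-1}^{\odot\ell_{n-1}+k_{n-1}}$, so that $\Lc^{w,k}_{\ell_1\dots\ell_{n-1}}f(\omega,p)=\int_{\omega^\perp}y_1^{k_1}\cdots y_{n-1}^{k_{n-1}}\,\langle f(p\omega+y),\Theta(\omega)\rangle\,dy$. Using the identity $\langle\D u,\theta^{\odot r}\rangle=\langle\theta,\nabla\rangle\langle u,\theta^{\odot(r-1)}\rangle$ and its multilinear analogue for monomials, one checks that $\langle\D^i v_i,\Theta(\omega)\rangle$, restricted to $H_{\omega,p}$, is a constant-coefficient differential operator of order exactly $i$ in the tangential variables $y_1,\dots,y_{n-1}$ (no $\partial_p$ appears) applied to contractions of $v_i$ against tangential degree-$(m-i)$ monomials. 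Substituting this and integrating by parts $i$ times moves the operator onto the weight $y_1^{k_1}\cdots y_{n-1}^{k_{n-1}}$; since the weight has degree $k$, the contribution of $\D^i v_i$ to $\Lc^{w,k}_{\ell_1\dots\ell_{n-1}}f$ vanishes identically for $i>k$, while for $i=k$ only the single term that differentiates the weight to a nonzero constant survives, and that term is a fixed nonzero multiple of the unweighted LRT $\Lc^{m-k}_{\ell_1\dots\ell_{n-1}}v_k(\omega,p)$. Consequently, subtracting the explicitly computable contributions of $v_0,\dots,v_{k-1}$ from $\Lc^{w,k}_{\ell_1\dots\ell_{n-1}}f$ produces, up to known nonzero constants, the entire family $\{\Lc^{m-k}_{\ell_1\dots\ell_{n-1}}v_k\}$ of LRTs of the $(m-k)$-tensor field $v_k$.

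To conclude: for $1\le k\le m-1$ the field $v_k$ is solenoidal, so the argument used for $v_0$ (now applied to an $(m-k)$-tensor) shows that its LRT data determine $\overline{\Rc}v_k$ completely, and componentwise Radon inversion yields $v_k\in H^{s+k}_{t+k}(S^{m-k}(\Rb^n))$; the case $k=m$ is immediate because $v_m$ is a scalar and the reduction above turns $\Lc^{w,m}f$ into a nonzero multiple of $\Rc v_m$. Tracking the index shifts through the mapping properties of the weighted Radon transform on the $H^s_t$ scale (differentiating a degree-$k$ polynomial weight on the Fourier side accounts for the drop to $t+k$, matching Theorem \ref{Decomposition Result}) gives the stability estimates, and reassembling $f=\sum_{i=0}^m\D^i v_i$ finishes the proof. (An alternative finish for each $v_k$ would route through the elliptic solvability of Theorem \ref{th:solvability of elliptic system}, but the Radon inversion above already suffices.)

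I expect the main obstacle to be the combinatorics of the inductive step: verifying that contracting the symmetrized derivative $\D^i$ against the tangential monomials produces an order-$i$ \emph{tangential} constant-coefficient operator with no lower-order and no $\partial_p$ contributions, and then that integration by parts against the degree-$k$ weight collapses the $i=k$ term to exactly one nonzero multiple of $\Lc^{m-k}v_k$ while annihilating all $i>k$ terms. The secondary technical points — the passage ``$p$-independent $\Rightarrow$ zero'', the decay used there, and the $H^s_t$-continuity of all the operations — are handled by first working on $\Sc(S^m(\Rb^n))$ and extending by density using the estimates from Theorem \ref{Decomposition Result} and the Radon isometry.
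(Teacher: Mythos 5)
Your proposal is correct, and while your recovery of $v_0$ essentially coincides with the paper's (the paper gets the vanishing of the $\omega$-contracted components of $\overline{\Rc}v_0$ from a cited result on divergence-free fields, whereas you re-derive it from the intertwining $\overline{\Rc}(\delta u)=j_\omega\,\partial_p\,\overline{\Rc}u$ together with decay in $p$ and density --- both are fine), your inductive step takes a genuinely different route. The paper applies $\left<e_i,\omega\right>\partial_p$ to $\Lc^{w,k}_{\ell_1\dots\ell_{n-1}}f$, uses property \eqref{eq:property of Radon transform} and tangential integration by parts (its terms $I_1$--$I_4$) to isolate $\Rc\bigl(\bigl<\partial_i v_k,\cdot\bigr>\bigr)$, then applies the operator a second time and sums over $i$ to produce $\overline{\Rc}\bigl(\overline{\Delta}v_k\bigr)$; recovering $v_k$ there requires a componentwise Radon inversion followed by inverting the componentwise Laplacian on the weighted scale. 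You instead expand $\bigl<\D^i v_i,\Theta(\omega)\bigr>$ as a purely tangential constant-coefficient operator of order exactly $i$ and integrate by parts directly against the degree-$k$ polynomial weight: all $i>k$ contributions vanish (some $b_j>k_j$ must occur), the $i=k$ contribution collapses to a single nonzero constant multiple of the unweighted LRT $\Lc^{m-k}_{\ell_1\dots\ell_{n-1}}v_k$, and the $i<k$ contributions are computable from the already-known $v_0,\dots,v_{k-1}$; then solenoidality of $v_k$ (or $v_m$ being scalar) lets you rerun the $v_0$-argument on the $(m-k)$-tensor $v_k$ and finish with one Radon inversion. Your route buys a more direct reconstruction --- no second $p$-derivative of the data and no elliptic (Laplacian) solve --- at the price of the combinatorial bookkeeping you flag, which checks out; the paper's route keeps the combinatorics minimal by fixing the weight $\left<x,\omega_1\right>^k$ (see its Remark) and pays with the extra Laplacian-inversion step. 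Your closing remarks on stability estimates are heuristic, but they are not needed for the uniqueness statement of the theorem.
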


\begin{theorem}\label{th:inversion using transversal}[Recovery of $f$ from TRT and weighted TRT data]
For $s \in \mathbb{R}$ and $t > -\frac{n}{2}$, a symmetric $m$-tensor field $f \in H^{s+m}_t(S^m(\Rb^n))$ can be reconstructed uniquely from a combination of its transversal and weighted transversal Radon transform. We show that the solenoidal part $v_m$ (as in the above decomposition Theorem \ref{Decomposition Result}) is uniquely determined from its transversal Radon transform $\Tc^m f$, and the remaining components $v_{m - k}$, $k = 1, \dots, m$, can be reconstructed from weighted transversal Radon transform of order $k$, $\Tc_{\ell_1 \dots \ell_{n - 1}}^{w,k} f$.
\end{theorem}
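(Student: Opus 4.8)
The plan is to peel off the summands of the decomposition $f=\sum_{i=0}^m\D^iv_i$ of Theorem \ref{Decomposition Result} from the top, in parallel with the treatment of $\Lc^m$ and $\Lc^{w,k}$ in Theorem \ref{th:inversion using longitudinal}. The workhorse is the pointwise identity $\langle\D^iv_i(x),\omega^{\odot m}\rangle=\langle\omega,\nabla\rangle^i\langle v_i(x),\omega^{\odot(m-i)}\rangle$, valid because contracting with the fully symmetric tensor $\omega^{\odot m}$ annihilates the symmetrizations inside $\D$; combining it with \eqref{eq:property of Radon transform} (so that integrating $\langle\omega,\nabla\rangle$ over $H_{\omega,p}$ produces $\partial_p$) gives
\[\Tc^m f(\omega,p)=\sum_{i=0}^m\partial_p^{\,i}\,\Rc\bigl(\langle v_i,\omega^{\odot(m-i)}\rangle\bigr)(\omega,p).\]
For $0\le i\le m-1$, the field $\D^iv_i$ with $v_i$ solenoidal lies in the kernel of $\Tc^m$ by Theorem \ref{th:kernel description}(b) (equivalently, $\partial_p\Rc(\langle v_i,\omega^{\odot(m-i)}\rangle)=\Rc(\langle\delta v_i,\omega^{\odot(m-i-1)}\rangle)=0$ forces that Radon transform to be $p$-independent, hence zero), so only the $i=m$ term survives: $\Tc^m f=\partial_p^{\,m}\Rc v_m$. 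Its one-dimensional Fourier transform in $p$ equals $(\I\sigma)^m\,\widehat{\Rc v_m}$, which vanishes to order $m$ at $\sigma=0$; dividing out and inverting $\Rc$ via the isometry recalled in Section \ref{Sec: Definitions} reconstructs $v_m$ with the stated estimate.

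For the inductive step I would assume $v_m,\dots,v_{m-k+1}$ already known and use the order-$k$ weighted TRT to recover $v_{m-k}$. Expanding the hyperplane integral in the frame $\{\omega,\omega_1,\dots,\omega_{n-1}\}$ and writing $\Rc^{w,\ell}h(\omega,p)=\int_{\omega^\perp}y_1^{\ell_1}\cdots y_{n-1}^{\ell_{n-1}}\,h(p\omega+y)\,dy$ for the weighted Radon transform of a scalar $h$, the same two identities yield $\Tc^{w,k}_{\ell_1\dots\ell_{n-1}}f(\omega,p)=\sum_{i=0}^m\partial_p^{\,i}\Rc^{w,\ell}\bigl(\langle v_i,\omega^{\odot(m-i)}\rangle\bigr)(\omega,p)$. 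Passing to the Fourier transform in $p$ and using the Fourier-slice identity $\widehat{\Rc^{w,\ell}h}(\omega,\sigma)=\I^{|\ell|}\,(D_\omega^{\ell}\hat h)(\sigma\omega)$, where $D_\omega^{\ell}:=D_{\omega_1}^{\ell_1}\cdots D_{\omega_{n-1}}^{\ell_{n-1}}$ denotes differentiation of the $n$-dimensional Fourier transform in the directions transverse to the ray $\sigma\mapsto\sigma\omega$, this becomes
\[\widehat{\Tc^{w,k}_{\ell_1\dots\ell_{n-1}}f}(\omega,\sigma)=\I^{|\ell|}\sum_{i=0}^m(\I\sigma)^i\,\bigl\langle(D_\omega^{\ell}\hat v_i)(\sigma\omega),\,\omega^{\odot(m-i)}\bigr\rangle,\qquad|\ell|=k.\]

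The heart of the matter is a solenoidal--vanishing lemma: if $\delta v=0$ then $\langle(D_\omega^{\ell}\hat v)(\sigma\omega),\,\omega^{\odot q}\odot T\rangle=0$ for every tensor $T$ whenever $q>|\ell|$. I would prove this by induction on $|\ell|$, differentiating the identity $\langle\hat v(\xi),\xi\odot(\cdot)\rangle\equiv0$ (which encodes $\delta v=0$ in Fourier variables) and evaluating at $\xi=\sigma\omega$: each differentiation trades one unit of $|\ell|$ for one factor $1/\sigma$ while still leaving one more $\omega$-contraction than derivatives, so the lemma closes. Applying it with $q=m-i$ annihilates every summand with $i<m-k$; the summands with $i>m-k$ are already known by the induction hypothesis; and for the borderline summand $i=m-k$ (where $m-i=|\ell|=k$) the same computation, run to the end, gives $\langle(D_\omega^{\ell}\hat v_{m-k})(\sigma\omega),\omega^{\odot k}\rangle=c_\ell\,\sigma^{-k}\,\langle\hat v_{m-k}(\sigma\omega),\,\omega_1^{\odot\ell_1}\odot\cdots\odot\omega_{n-1}^{\odot\ell_{n-1}}\rangle$ with an explicit nonzero constant $c_\ell$. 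Hence the family $\{\Tc^{w,k}_{\ell_1\dots\ell_{n-1}}f:|\ell|=k\}$ determines, for every $(\omega,\sigma)$, all the ``fully transverse'' components $\langle\hat v_{m-k}(\sigma\omega),\omega_1^{\odot\ell_1}\odot\cdots\odot\omega_{n-1}^{\odot\ell_{n-1}}\rangle$, while solenoidality of $v_{m-k}$ (i.e. $\langle\hat v_{m-k}(\sigma\omega),\omega\odot(\cdot)\rangle=0$) forces every component involving $\omega$ to vanish. Therefore $\hat v_{m-k}(\sigma\omega)$ is completely determined for all $(\sigma,\omega)$, i.e. on all of $\Rb^n$, so $v_{m-k}$ is recovered; iterating over $k=1,\dots,m$ reconstructs $v_{m-1},\dots,v_0$ and hence $f=\sum_{i=0}^m\D^iv_i$.

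The main obstacle will be exactly this solenoidal--vanishing lemma together with its borderline refinement: the bookkeeping must be organized so that, at each order $k$ and on each ray, the transverse derivatives of $\hat v_i$ contracted with $\omega^{\odot(m-i)}$ either vanish ($i<m-k$) or collapse, after division by $\sigma^k$, onto a single transverse component of $\hat v_{m-k}$, with no residual terms. The quantitative estimates, and in particular the loss of $m$ derivatives underlying the hypothesis $f\in H^{s+m}_t$, would then follow by tracking the powers of $\sigma$ (the data vanishes to the appropriate order at $\sigma=0$, so the divisions are legitimate) together with the Radon isometry and the boundedness of the operators $D_j$; alternatively, once the weighted data is seen to determine the componentwise Radon transforms of the relevant symmetrized derivatives of $v_{m-k}$, the recovery of $v_{m-k}$ can be recast as solvability of an elliptic system of the type treated in Theorem \ref{th:solvability of elliptic system}.
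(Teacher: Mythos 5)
Your outline is correct, but it follows a genuinely different route from the paper. The paper never recovers $\hat v_{m-k}$ ray by ray: at stage $k$ it targets the field $\delta^{m-k}\D^{m-k}v_{m-k}$, using Lemma \ref{lem: Radon of divergence} ($\Rc(\delta^m g)=\partial_p^m\Tc^m g$), Proposition \ref{prop: for solenoidal fields} (so that $\delta^{m-k}\D^{m-k}v_{m-k}$ is again solenoidal and the $\o$-components of its componentwise Radon transform drop out via Theorem \ref{th:kernel description}), and an explicitly derived commutation formula \eqref{eq: delta operator induction} for $\delta^p\bigl(\left<x,\o_1\right>^k f\bigr)$ to express the fully transverse components in terms of $\partial_p^m\Tc^{w,k}_{\ell_1\dots\ell_{n-1}}f$ and the already known $v_{m-k+1},\dots,v_m$; it then inverts the scalar Radon transform and passes from $\delta^{m-k}\D^{m-k}v_{m-k}$ to $v_{m-k}$ by the elliptic solvability result, Theorem \ref{th:solvability of elliptic system}. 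You instead work on the Fourier side: the moment weights become transverse derivatives of $\hat f$ along the slice, your solenoidal--vanishing lemma (obtained by differentiating $j_\xi\hat v_i=0$ and evaluating at $\xi=\sigma\o$) plays the combined role of Proposition \ref{prop: for solenoidal fields} and the kernel theorem, the borderline computation with its nonzero constant $c_\ell$ replaces the $\delta^p\bigl(\left<x,\o_1\right>^k f\bigr)$ identity, and solenoidality at $\xi=\sigma\o$ fills in the $\o$-components so that $\hat v_{m-k}$ is determined on every ray with no elliptic system to solve (the divisions by $\sigma^k$, and by $\sigma^m$ in the $v_m$ step, do that job and are exactly where the $H^s_t$ weights are needed). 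I checked your two structural identities (the slice formula for the weighted transforms, which works because transverse derivatives annihilate $\left<\o,\xi\right>$, and the vanishing/borderline lemma in low order) and they are sound, so the scheme closes; what remains informal in your write-up is precisely what you flag: a full inductive proof of the solenoidal--vanishing lemma with its constant, and the functional-analytic justification of the $\sigma$-divisions (mapping properties in $H^s_t$), which in the paper are packaged once and for all in Theorem \ref{th:solvability of elliptic system}. Your route buys a more explicit, ray-by-ray reconstruction that makes the derivative loss ($f\in H^{s+m}_t$) visible as powers of $\sigma$; the paper's route stays at the level of Radon-transform identities and ends with a cleanly quotable elliptic inversion step.
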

\begin{remark}
    Out of the total $\binom{k + n - 2}{k} \times \binom{m - k + n - 2}{m - k}$ weighted longitudinal Radon transforms of order $k$, any $\binom{k + n - 2}{k}$ can be considered to carry out the reconstruction similar to the reconstruction done in Section \ref{Sec: Reconstruction using LRT and WLRT}. For the simplicity of notation, we have considered the $\binom{k + n - 2}{k}$ transforms having $k_1 = k$ and $k_i = 0$ for $i \neq 1$, that is, we have shown the calculation for transforms of the form:
    \begin{align}
        \Lc^{w,k}_{\ell_1 \dots \ell_{n - 1}} f (\omega, p) = \int_{H_{\omega,p}} \left<x, \o_1\right>^{k} \left<f(x), \o_1^{\odot \ell_1 + k} \odot \dots \odot \o_{n - 1}^{\odot \ell_{n - 1}}\right> \,dx
    \end{align}
\end{remark}
\section{Proof of Theorem \ref{th:kernel description} (Kernel description for LRT and TRT)}\label{Sec:kernel description}
\begin{proof}[Proof of part (a)]
We prove the result here in one direction only; the reverse direction follows directly from a combination of the decomposition result Theorem \ref{Decomposition Result} and the inversion result Theorem \ref{th:inversion using longitudinal} (proved in the next section). Let $f = \D v$. Then we have
\begin{align*}
\Lc^m_{\ell_1\dots \ell_{n - 1}} f(\o, p) &= \int_{H_{\omega,p}} \left<(\D v)(x), \o_1^{\odot \ell_1} \odot \dots \odot \o_{n - 1}^{\odot \ell_{n - 1}}\right> \,dx \\
&= \sigma(i_1, \dots, i_m) \int_{H_{\omega,p}} \frac{\partial v_{i_1\dots i_{m-1}}}{\partial x_{i_m}} (x) \left(\o_1^{\odot \ell_1} \odot \dots \odot \o_{n - 1}^{\odot \ell_{n - 1}}\right)^{i_1 \dots i_m} \,dx\\
&= 0 \quad \mbox{ using property \eqref{eq:property of Radon transform} of Radon transform.}
\end{align*}
This completes the proof in the forward direction.
\end{proof}
\begin{proof}[Proof of part (b)]
Before proving that $\displaystyle  f = \sum_{i=0}^{m-1} \D^iv_i$ implies $\Tc^m f = 0$, we prove the following two results:
\begin{enumerate}[label=(\Alph*)]
\item If $v$ is a solenoidal $m$-tensor field with $m \geq 1$, then for any non-negative integers $\ell_i; i = 1, \dots, n$ such that $\ell_1 + \dots + \ell_n = m - 1$, we have
$$\int_{H_{\o, p}} \left<v(x), \o_1^{\odot \ell_1} \odot \dots \odot \o_{n - 1}^{\odot \ell_{n - 1}} \odot \o^{\odot \ell_n + 1}\right> = 0; \quad \text{for all } (\o, p) \in \mathbb{S}^{n - 1} \times \Rb.$$
\begin{proof}[Proof of (A)]
For a fixed $\o \in \Sb^{n-1}$, we define a vector field 
$$\tilde{f}(x) = \left<v(x), \o_1^{\odot \ell_1} \odot \dots \odot \o_{n - 1}^{\odot \ell_{n - 1}} \odot \o^{\odot \ell_n}\right>.$$
Then $$\delta \tilde{f} =  \left<\delta v, \o_1^{\odot \ell_1} \odot \dots \odot \o_{n - 1}^{\odot \ell_{n - 1}} \odot \o^{\odot \ell_n}\right> = 0$$
since $v$ is solenoidal. Thus $\tilde{v}$ is a divergence-free vector field in $\Rb^n$. Hence, using \cite[Proposition 4]{Kunyansky_2023}, we get 
$$\Tc ^{1}\tilde{v} (\o, p) = \int_{H_{\o, p}} \left<v(x), \o_1^{\odot \ell_1} \odot \dots \odot \o_{n - 1}^{\odot \ell_{n - 1}} \odot \o^{\odot \ell_n + 1}\right> = 0.$$
Note that when $\ell_n = m - 1$, this result states that the solenoidal $m$-tensor field $v$ lies in the kernel of the transversal Radon transform $\mathcal{T}^m f$.
\end{proof}
\item For a symmetric $(m-1)$-tensor field $v$, the following identity holds:
$$ \Tc ^m(\D v) (\o, p) = \frac{\partial }{\partial p} \Tc^{m-1}(v)(\o,p).$$
\begin{proof}[Proof of (B)]
Consider
\begin{align*}
 \Tc^m(\D v) (\o, p) &= \int_{H_{\omega,p}} \left<\left(\D v\right) (x), \o^{\odot m}\right> \,dx\\
&=\int_{H_{\omega,p}} \left(\D v\right)_{i_1\dots i_m}(x)\o^{i_1}\cdots \o^{i_m}\,dx\\
&= \int_{H_{\omega,p}} \frac{\partial v_{i_1\dots i_{m-1}}}{\partial x_{i_m}} (x)\o^{i_1}\cdots \o^{i_m}\,dx\\
&= \langle \o, \o \rangle \frac{\partial }{\partial p}\int_{H_{\omega,p}} v_{i_1\dots i_{m-1}} (x)\o^{i_1}\cdots \o^{i_{m-1}}\,dx, \quad \mbox{ using property \eqref{eq:property of Radon transform}}\\
&= \frac{\partial }{\partial p}\Tc ^{m-1}(v)(\o, p).
\end{align*}
\end{proof}
\end{enumerate}
In the above result, the left-hand side has the transversal Radon transform of $m$-ordered tensor field $(\D v)$ whereas the right-hand side contains the transversal Radon transform of $(m - 1)$-ordered tensor field $v$. Using the above proposition repeatedly, we get the following identity for a $(m-i)$-ordered symmetric tensor field $v$:
$$ \Tc^m \left(\D^i v\right)(\o, p)  = \frac{\partial^i }{\partial p^i}\Tc^{m-i} (v)(\o,p).  $$
Now for $f = \sum_{i=0}^{m-1} \D^iv_i$ with $\delta v_i = 0$ for all $i = 1, \dots, m - 1$, we have 
\begin{align*}
\sum_{i=0}^{m-1} \Tc ^m\left(\D^i v_i\right) (\o,p) &= \sum_{i=0}^{m-1} \int_{H_{\omega,p}} \left<\left(\D^i v_i\right) (x), \o^{\odot m}\right>\,dx\\ 
&= \sum_{i=0}^{m-1} \frac{\partial^i }{\partial p^i} \Tc^{m-i} v_i (\o,p), \quad \mbox{ using result (B)}\\
&= 0, \quad \mbox{ using result (A)}.
\end{align*}
The converse can be proved by showing that $v_m$ (from the decomposition given in equation \eqref{eq:decomposition of f}) can be recovered from the data of the transversal Radon transform $\Tc^m f$. This is done in Subsection \ref{subsec: inversion of TRT}.
\end{proof}

\section{Proof of Theorem \ref{th:inversion using longitudinal}} \label{Sec: Reconstruction using LRT and WLRT}
We divide the proof of Theorem \ref{th:inversion using longitudinal} in three subsections; the first subsection will show the solenoidal part $v_0$ of $f$ can be recovered from $\Lc_{\ell_1 \dots \ell_{n - 1}}^m f$ whereas the other two subsections will address the recovery of the potential part from the weighted longitudinal transforms.
\subsection{Recovery of \texorpdfstring{$v_0$}{v0} from \texorpdfstring{$\Lc^m_{\ell_1 \dots \ell_{n - 1}} f$}{Lcm ell1... elln-1 f}} \label{subsec: inversion of LRT}
\begin{proof}[Proof of recovering of $v_0$ using LRT]
It is sufficient to show that the componentwise Radon transform $\overline{\Rc}$ of $v_0$ can be recovered from the knowledge of longitudinal Radon transforms. Since $\o, \o_1, \o_2, \dots, \o_{n - 1}$ are linearly independent vectors in $\FR^n$, the collection of $\binom{m + n - 1}{k}$ $m$-tensors of the form 
$$\o_1^{\odot \ell_1} \odot \o_2^{\odot \ell_2} \odot \dots \odot \o_{n - 1}^{\odot \ell_{n - 1}} \odot \o^{\odot \ell_n}$$
\noindent such that $\ell_1 + \dots + \ell_n = m$ are linearly independent using \cite[Lemma 6]{Microlocal_2018}. It is known that the dimension of the space of $m$ tensors in $\FR^n$ is also $\binom{m + n - 1}{m}$, hence $v_0$ at any point $x \in \Rb^n$ can be written as follows:
\begin{align*}
    v_0 (x) &= \sum_{\ell_1 + \dots + \ell_n = m} \left<v_0 (x), \o_1^{\odot \ell_1} \odot \o_2^{\odot \ell_2} \odot \dots \odot \o_{n - 1}^{\odot \ell_{n - 1}} \odot \o^{\odot \ell_n} \right> \o_1^{\odot \ell_1} \odot \o_2^{\odot \ell_2} \odot \dots \odot \o_{n - 1}^{\odot \ell_{n - 1}} \odot \o^{\odot \ell_n}\\ 
    &= \sum_{\ell_1 + \dots + \ell_n = m - 1} \left<v_0 (x), \o \odot \o_1^{\odot \ell_1} \odot \o_2^{\odot \ell_2} \odot \dots \odot \o_{n - 1}^{\odot \ell_{n - 1}} \odot \o^{\odot \ell_n} \right> \o_1^{\odot \ell_1} \odot \o_2^{\odot \ell_2} \odot \dots \odot \o_{n - 1}^{\odot \ell_{n - 1}} \odot \o^{\odot \ell_n + 1}\\
    &+ \sum_{\ell_1 + \dots + \ell_{n - 1} = m} \left< v_0 (x), \o_1^{\odot \ell_1} \odot \o_2^{\odot \ell_2} \odot \dots \odot \o_{n - 1}^{\odot \ell_{n - 1}} \right> \o_1^{\odot \ell_1} \odot \o_2^{\odot \ell_2} \odot \dots \odot \o_{n - 1}^{\odot \ell_{n - 1}}. 
\end{align*}
In the above equation, the summation over $\ell_1 + \dots + \ell_n = m$ has been split into two cases: one when $\ell_n = 0$ and the other when $\ell_n \geq 1$. Hence, the componentwise Radon transform of $v_0$ can be written as follows:
\begin{align}\label{eq: componentwise Radon transform}
\begin{split}
    \overline{\Rc} v_0 &= \sum_{\ell_1 + \dots + \ell_n = m} \left<\overline{\Rc} v_0, \o_1^{\odot \ell_1} \odot \dots \odot \o_{n - 1}^{\odot \ell_{n - 1}} \odot \o^{\odot \ell_n}\right> \o_1^{\odot \ell_1} \odot \o_2^{\odot \ell_2} \odot \dots \odot \o_{n - 1}^{\odot \ell_{n - 1}} \odot \o^{\odot \ell_n}\\
    &= \sum_{\ell_1 + \dots + \ell_n = m - 1} \left<\overline{\Rc} v_0, \o \odot \o_1^{\odot \ell_1} \odot \dots \odot \o_{n - 1}^{\odot \ell_{n - 1}} \odot \o^{\odot \ell_n}\right> \o_1^{\odot \ell_1} \odot \o_2^{\odot \ell_2} \odot \dots \odot \o_{n - 1}^{\odot \ell_{n - 1}} \odot \o^{\odot \ell_n + 1}\\
    &+ \sum_{\ell_1 + \dots + \ell_{n - 1} = m} \left<\overline{\Rc} v_0, \o_1^{\odot \ell_1} \odot \dots \odot \o_{n - 1}^{\odot \ell_{n - 1}}\right> \o_1^{\odot \ell_1} \odot \o_2^{\odot \ell_2} \odot \dots \odot \o_{n - 1}^{\odot \ell_{n - 1}}
\end{split}
\end{align}
\noindent The first summation in the above equation is zero from claim A of part (b) of Theorem \ref{th:kernel description}. Also note that 
$$\left<\overline{\Rc} v_0, \o_1^{\odot \ell_1} \odot \dots \odot \o_{n - 1}^{\odot \ell_{n - 1}}\right> = \Lc_{\ell_1 \dots \ell_{n - 1}}^m f.$$
\noindent So the right-hand side in the above equation is completely known from the given data; hence, $\overline{\Rc} v_0$ can be recovered and therefore $v_0$ can be recovered using invertibility of the Radon transform. This completes the proof of our claim.
\end{proof}
Next, we prove that $v_k$ for $k = 1, \dots, m$ can be recovered from weighted longitudinal Radon transforms of order $k$. This is proved using mathematical induction.
\subsection{Recovery of \texorpdfstring{$v_1$}{v1} from \texorpdfstring{$\Lc^{w,1}_{\ell_1 \dots \ell_{n - 1}} f$}{Lcm ell1... elln-1 f}}\label{Subsec: linearly weighted LRT}
\begin{proof}[Proof of recovering of $v_1$ from $\Lc^{w,1}_{\ell_1 \dots \ell_{n - 1}} f$]
Let $\overline{\Delta} v_1$ denote the componentwise Laplacian of the tensor field $v_1$, that is, $$(\overline{\Delta} v_1)_{i_1 \dots i_{m - 1}} (x) = \left(\frac{\partial^2}{\partial x_1^2} + \dots + \frac{\partial^2}{\partial x_n^2}\right) v_{i_1 \dots i_{m - 1}}(x).$$ The idea here is to obtain, the componentwise Radon transform $\overline{\Rc}$ of the componentwise Laplacian of $v_1$, using the following formula:
\begin{align} \label{eq: componentwise Radon transform WLRT1}
\overline{\Rc} \left(\overline{\Delta} v_1\right) &= \sum_{\ell_1 + \dots + \ell_n = m - 1} \left<\overline{\Rc} \left(\overline{\Delta} v_1\right), \o_1^{\odot \ell_1} \odot \dots \odot \o_{n - 1}^{\odot \ell_{n - 1}} \odot \o^{\odot \ell_n}\right> \o_1^{\odot \ell_1} \odot \dots \odot \o_{n - 1}^{\odot \ell_{n - 1}} \odot \o^{\odot \ell_n}\nonumber \\
&= \sum_{\ell_1 + \dots + \ell_n = m - 2} \left<\overline{\Rc} \left(\overline{\Delta} v_1\right), \o \odot \o_1^{\odot \ell_1} \odot \dots \odot \o_{n - 1}^{\odot \ell_{n - 1}} \odot \o^{\odot \ell_n}\right> \o \odot \o_1^{\odot \ell_1} \odot \dots \odot \o_{n - 1}^{\odot \ell_{n - 1}} \odot \o^{\odot \ell_n}\nonumber \\
&\qquad + \sum_{\ell_1 + \dots + \ell_{n - 1} = m - 1} \left<\overline{\Rc} \left(\overline{\Delta} v_1\right), \o_1^{\odot \ell_1} \odot \dots \odot \o_{n - 1}^{\odot \ell_{n - 1}}\right> \o_1^{\odot \ell_1} \odot \dots \odot \o_{n - 1}^{\odot \ell_{n - 1}}
\end{align}
The tensor field $\overline{\Delta} v_1$ is solenoidal since $v_1$ is solenoidal, hence the first summation on the right-hand side of the above equation is zero using 2(A) of Theorem \ref{th:kernel description}. Next we show that 
$$\left<\overline{\Rc} \left(\overline{\Delta} v_1\right), \o_1^{\odot \ell_1} \odot \dots \odot \o_{n - 1}^{\odot \ell_{n - 1}}\right>$$
can be written in terms of $v_0$ and $\Lc^{w,1}_{\ell_1 \dots \ell_{n - 1}} f$. Let $e_i$ for $i = 1, \dots, n$ denote the standard basis of $\FR^n$ and $\left<\cdot, \cdot\right>$ denote the usual dot product of vectors in $\FR^n$. Then for any $1 \leq i \leq n$,
\begin{align} \label{eq: derivative of weighted LRT order 1}
    \left<e_i, \o\right> \frac{\partial}{\partial p} \Lc^{w,1}_{\ell_1 \dots \ell_{n - 1}} f &= \left<e_i, \o\right> \frac{\partial}{\partial p} \Rc \left(\left<x, \o_1\right> \left<f(x), \o_1^{\odot \ell_1 + 1} \odot \o_2^{\odot \ell_2} \odot \dots \odot \o_{n - 1}^{\odot \ell_{n - 1}}\right>\right)\nonumber \\
    &= \Rc \left(\frac{\partial}{\partial x_i} \left\{\left<x, \o_1\right> \left<f(x), \o_1^{\odot \ell_1 + 1} \odot \o_2^{\odot \ell_2} \odot \dots \odot \o_{n - 1}^{\odot \ell_{n - 1}}\right>\right\}\right)\nonumber \\
    &= \Rc \left(\frac{\partial}{\partial x_i} \left\{\left<x, \o_1\right> \left<v_0 (x), \o_1^{\odot \ell_1 + 1} \odot \o_2^{\odot \ell_2} \odot \dots \odot \o_{n - 1}^{\odot \ell_{n - 1}}\right>\right\}\right)\nonumber \\
    &+ \underbrace{\Rc \left(\frac{\partial}{\partial x_i} \left\{\left<x, \o_1\right> \left<\D(v_1 + \D v_2 + \dots + \D^{m - 1} v_m) (x), \o_1^{\odot \ell_1 + 1} \odot \o_2^{\odot \ell_2} \odot \dots \odot \o_{n - 1}^{\odot \ell_{n - 1}}\right>\right\}\right)}_{I_1}.
\end{align}
Furthermore,
\begin{align*}
    I_1 &= \Rc \left(\left<e_i, \o_1\right> \left<\D(v_1 + \D v_2 + \dots + \D^{m - 1} v_m) (x), \o_1^{\odot \ell_1 + 1} \odot \o_2^{\odot \ell_2} \odot \dots \odot \o_{n - 1}^{\odot \ell_{n - 1}}\right>\right)\\
    &\qquad + \Rc \left(\left<x, \o_1\right> \frac{\partial}{\partial x_i} \left<\D(v_1 + \D v_2 + \dots + \D^{m - 1} v_m) (x), \o_1^{\odot \ell_1 + 1} \odot \o_2^{\odot \ell_2} \odot \dots \odot \o_{n - 1}^{\odot \ell_{n - 1}}\right>\right).
\end{align*}
The first part on the right-hand side of the above equation is zero because it is the scalar multiple of the longitudinal Radon transform of a potential tensor field. Let $\nabla = \left(\frac{\partial}{\partial x_1}, \dots, \frac{\partial}{\partial x_n}\right)$, then for any symmetric $(k - 1)$-ordered tensor field $f$ and any vector $\o$, we have
\begin{align*}
    \left<\D f, \o^{\odot k}\right> &= \o^{i_1} \dots \o^{i_k} \frac{\partial}{\partial x_{i_k}} f_{i_1 \dots i_{k - 1}} =   \left<\o, \nabla\right> \left<f, \o^{\odot k - 1}\right>.
\end{align*}
\noindent Using this relation, $I_1$ can be rewritten as follows:
\begin{align*}
    I_1 &= \Rc \left(\left<x, \o_1\right> \frac{\partial}{\partial x_i} \left<\D(v_1 + \D v_2 + \dots + \D^{m - 1} v_m) (x), \o_1^{\odot \ell_1 + 1} \odot \o_2^{\odot \ell_2} \odot \dots \odot \o_{n - 1}^{\odot \ell_{n - 1}}\right>\right)\\
    &= \Rc \left(\left<x, \o_1\right> \left<\o_1, \nabla\right> \left<\frac{\partial}{\partial x_i} (v_1 + \D v_2 + \dots + \D^{m - 1} v_m) (x), \o_1^{\odot \ell_1} \odot \o_2^{\odot \ell_2} \odot \dots \odot \o_{n - 1}^{\odot \ell_{n - 1}}\right>\right).
\end{align*}
Any point $x$ in the hyperplane can be represented as $x = p \o + y_1 \o_1 + \dots + y_{n - 1} \o_{n - 1}$ for some $y_1, \dots, y_{n - 1} \in \FR$. Using this representation, the above equation can be written as follows:\\
\begin{align*}
    I_1 &= \int_\FR \dots \int_\FR y_1 \frac{\partial}{\partial y_1} \left<\frac{\partial}{\partial x_i} (v_1 + \D v_2 + \dots + \D^{m - 1} v_m) (x), \o_1^{\odot \ell_1} \odot \o_2^{\odot \ell_2} \odot \dots \odot \o_{n - 1}^{\odot \ell_{n - 1}}\right> \,dy_1 \dots \,dy_{n - 1}\\
    &= - \int_\FR \dots \int_\FR \left<\frac{\partial}{\partial x_i} (v_1 + \D v_2 + \dots + \D^{m - 1} v_m) (x), \o_1^{\odot \ell_1} \odot \o_2^{\odot \ell_2} \odot \dots \odot \o_{n - 1}^{\odot \ell_{n - 1}}\right> \,dy_1 \dots \,dy_{n - 1}\\
    &= - \int_\FR \dots \int_\FR \left<\frac{\partial}{\partial x_i} (v_1) (x), \o_1^{\odot \ell_1} \odot \o_2^{\odot \ell_2} \odot \dots \odot \o_{n - 1}^{\odot \ell_{n - 1}}\right> \,dy_1 \dots \,dy_{n - 1}\\
    & \hspace{0.5cm} - \underbrace{\int_\FR \dots \int_\FR \left<\frac{\partial}{\partial x_i} (\D v_2 + \dots + \D^{m - 1} v_m) (x), \o_1^{\odot \ell_1} \odot \o_2^{\odot \ell_2} \odot \dots \odot \o_{n - 1}^{\odot \ell_{n - 1}}\right> \,dy_1 \dots \,dy_{n - 1}.}_{I_2}
\end{align*}
Now there exists $1 \leq j \leq n - 1$ such that $\ell_j \neq 0$. Without loss of generality, let $\ell_1 \neq 0$, then we get:
\begin{align*}
    &\left<\frac{\partial}{\partial x_i} (\D v_2 + \dots + \D^{m - 1} v_m) (x), \o_1^{\odot \ell_1} \odot \o_2^{\odot \ell_2} \odot \dots \odot \o_{n - 1}^{\odot \ell_{n - 1}}\right>\\ & \hspace{3cm}= \left<\o_1, \nabla\right> \left<\frac{\partial}{\partial x_i} (v_2 + \dots + \D^{m - 2} v_m) (x), \o_1^{\odot \ell_1 - 1} \odot \o_2^{\odot \ell_2} \odot \dots \odot \o_{n - 1}^{\odot \ell_{n - 1}}\right>.
\end{align*}
So the integral $I_2$ becomes
\begin{align*}
    I_2 &= \int_\FR \dots \int_\FR \frac{\partial}{\partial y_1} \left<\frac{\partial}{\partial x_i} (v_2 + \dots + \D^{m - 2} v_m) (x), \o_1^{\odot \ell_1 - 1} \odot \o_2^{\odot \ell_2} \odot \dots \odot \o_{n - 1}^{\odot \ell_{n - 1}}\right> \,dy_1 \dots \,dy_{n - 1}\\
    &= 0,
\end{align*}
using integration by parts. Finally, substituting the value of $I_1$ in equation \eqref{eq: derivative of weighted LRT order 1}, we get:
\begin{align*}
    &\Rc \left(\left<\frac{\partial}{\partial x_i} (v_1) (x), \o_1^{\odot \ell_1} \odot \o_2^{\odot \ell_2} \odot \dots \odot \o_{n - 1}^{\odot \ell_{n - 1}}\right>\right)\\
    &\hspace{1cm}= \Rc \left(\frac{\partial}{\partial x_i} \left<x, \o_1\right> \left<v_0 (x), \o_1^{\odot \ell_1 + 1} \odot \o_2^{\odot \ell_2} \odot \dots \odot \o_{n - 1}^{\odot \ell_{n - 1}}\right>\right) - \left<e_i, \o\right> \frac{\partial}{\partial p} \Lc^{w,1}_{\ell_1 \dots \ell_{n - 1}} f.
\end{align*}
Applying the operator $\left<e_i, \o\right> \frac{\partial}{\partial p}$ to the above equation, we get
\begin{align*}
    &\Rc \left(\left<\frac{\partial^2}{\partial {x_i}^2} (v_1) (x), \o_1^{\odot \ell_1} \odot \o_2^{\odot \ell_2} \odot \dots \odot \o_{n - 1}^{\odot \ell_{n - 1}}\right>\right)\\
    &\hspace{1cm}= \Rc \left(\frac{\partial^2}{\partial {x_i}^2} \left<x, \o_1\right> \left<v_0 (x), \o_1^{\odot \ell_1 + 1} \odot \o_2^{\odot \ell_2} \odot \dots \odot \o_{n - 1}^{\odot \ell_{n - 1}}\right>\right) - {\left<e_i, \o\right>}^2 \frac{\partial^2}{\partial p^2} \Lc^{w,1}_{\ell_1 \dots \ell_{n - 1}} f.
\end{align*}
Adding the above equation for $i = 1, \dots, n$, we have
\begin{align*}
&\Rc \left(\Delta \left<v_1 (x), \o_1^{\odot \ell_1} \odot \o_2^{\odot \ell_2} \odot \dots \odot \o_{n - 1}^{\odot \ell_{n - 1}}\right>\right)\\    
&\quad \qquad =\left<\overline{\Rc}\left(\overline{\Delta}v_1 (x)\right), \o_1^{\odot \ell_1} \odot \o_2^{\odot \ell_2} \odot \dots \odot \o_{n - 1}^{\odot \ell_{n - 1}}\right>\\
&\quad \qquad = \Rc \left(\Delta \left<x, \o_1\right> \left<v_0 (x), \o_1^{\odot \ell_1 + 1} \odot \o_2^{\odot \ell_2} \odot \dots \odot \o_{n - 1}^{\odot \ell_{n - 1}}\right>\right) - \frac{\partial^2}{\partial p^2} \Lc^{w,1}_{\ell_1 \dots \ell_{n - 1}} f.
\end{align*}
Since $v_0 (x)$ and the weighted longitudinal Radon transforms of order 1 are known for all $\ell_1, \dots, \ell_{n - 1}$ (such that $\ell_1 + \dots + \ell_{n - 1} = m - 1$), the right-hand side of equation \eqref{eq: componentwise Radon transform WLRT1} is known and thus, the componentwise Radon transform of  $\overline{\Delta} v_1$ is known. This is the same as saying that we know $\overline{\Delta} v_1$, and hence we know the $v_1$ as required.
\end{proof}

Now assume that $v_0, v_1, \dots v_{k - 1}$ are known for any $2 \leq k \leq m$, then $v_k$ can be obtained from the transforms $\Lc^{w,k}_{\ell_1 \dots \ell_{n - 1}} f$ as shown in the following subsection: 
\subsection{Recovery of \texorpdfstring{$v_k$}{vk} from \texorpdfstring{$\Lc^{w,k}_{\ell_1 \dots \ell_{n - 1}} f$}{Lcm ell1... elln-1 f}} \label{Subsec: WLRT}

\begin{proof}[Proof of recovering of $v_k$ from $\Lc^{w,k}_{\ell_1 \dots \ell_{n - 1}} f$]
Following the steps similar to recovery of $v_1$ in the previous subsection, the componentwise Radon transform $\overline{\Rc}$ of the componentwise Laplacian of $v_k$, $\overline{\Delta} v_k$, can be obtained using the formula:
\begin{align} \label{eq: componentwise Radon transform WLRTk}
    \overline{\Rc} \left(\overline{\Delta} v_k\right) &= \sum_{\ell_1 + \dots + \ell_n = m - k} \left<\overline{\Rc} \left(\overline{\Delta} v_k\right), \o_1^{\odot \ell_1} \odot \dots \odot \o_{n - 1}^{\odot \ell_{n - 1}} \odot \o^{\odot \ell_n}\right> \o_1^{\odot \ell_1} \odot \dots \odot \o_{n - 1}^{\odot \ell_{n - 1}} \odot \o^{\odot \ell_n}\nonumber \\
    &= \sum_{\ell_1 + \dots + \ell_n = m - k - 1} \left<\overline{\Rc} \left(\overline{\Delta} v_k\right), \o \odot \o_1^{\odot \ell_1} \odot \dots \odot \o_{n - 1}^{\odot \ell_{n - 1}} \odot \o^{\odot \ell_n}\right> \o \odot \o_1^{\odot \ell_1} \odot \dots \odot \o_{n - 1}^{\odot \ell_{n - 1}} \odot \o^{\odot \ell_n}\nonumber \\
    &\qquad + \sum_{\ell_1 + \dots + \ell_{n - 1} = m - k} \left<\overline{\Rc} \left(\overline{\Delta} v_k\right), \o_1^{\odot \ell_1} \odot \dots \odot \o_{n - 1}^{\odot \ell_{n - 1}}\right> \o_1^{\odot \ell_1} \odot \dots \odot \o_{n - 1}^{\odot \ell_{n - 1}}
\end{align} 
Since $v_k$ is solenoidal for $k = 2, \dots, m - 1$, the first summation on the right-hand side of the above equation is zero using Theorem \ref{th:kernel description}. For $k = m$, the above decomposition is not required, and the following arguments hold true. We show that 
$$\left<\overline{\Rc} \left(\overline{\Delta} v_k\right), \o_1^{\odot \ell_1} \odot \dots \odot \o_{n - 1}^{\odot \ell_{n - 1}}\right>; \quad \ell_1 + \dots + \ell_{n - 1} = m - k$$
can be written in terms of $v_0, v_1, \dots, v_{k - 1}$ and $\Lc^{w,j}_{\ell_1 \dots \ell_{n - 1}} f$ transforms for $1 \leq j \leq k$.  For any $1 \leq i \leq n$, we get 

\begin{align} \label{eq: derivative of weighted LRT order k}
    &\left<e_i, \o\right> \frac{\partial}{\partial p} \Lc^{w,k}_{\ell_1 \dots \ell_{n - 1}} f\nonumber \\
    &\quad \qquad = \left<e_i, \o\right> \frac{\partial}{\partial p} \Rc \left(\left<x, \o_1\right>^k \left<f(x), \o_1^{\odot \ell_1 + k} \odot \o_2^{\odot \ell_2} \odot \dots \odot \o_{n - 1}^{\odot \ell_{n - 1}}\right>\right)\nonumber \\
    & \quad \qquad= \Rc \left(\frac{\partial}{\partial x_i} \left\{\left<x, \o_1\right>^k \left<f(x), \o_1^{\odot \ell_1 + k} \odot \o_2^{\odot \ell_2} \odot \dots \odot \o_{n - 1}^{\odot \ell_{n - 1}}\right>\right\}\right)\nonumber \\
    & \quad \qquad = \Rc \left(k \left<e_i, \o_1\right> \left<x, \o_1\right>^{k - 1} \left<f(x), \o_1^{\odot \ell_1 + k} \odot \o_2^{\odot \ell_2} \odot \dots \odot \o_{n - 1}^{\odot \ell_{n - 1}}\right>\right)\nonumber \\
    & \quad \qquad \quad + \Rc \left(\left<x, \o_1\right>^k \frac{\partial}{\partial x_i} \left<f(x), \o_1^{\odot \ell_1 + k} \odot \o_2^{\odot \ell_2} \odot \dots \odot \o_{n - 1}^{\odot \ell_{n - 1}}\right>\right)\nonumber \\
    & \quad \qquad = k \left<e_i, \o_1\right> \Lc^{w,k - 1}_{\ell_1+1 \ell_2 \dots \ell_{n - 1}} f\nonumber \\
    &\quad \qquad \quad + \Rc \left(\left<x, \o_1\right>^k \frac{\partial}{\partial x_i} \left<(v_0 + \D v_1 + \dots + \D^{k - 1} v_{k - 1}) (x), \o_1^{\odot \ell_1 + k} \odot \o_2^{\odot \ell_2} \odot \dots \odot \o_{n - 1}^{\odot \ell_{n - 1}}\right>\right)\nonumber \\
    & \quad \qquad \quad + \underbrace{\Rc \left(\left<x, \o_1\right>^k \frac{\partial}{\partial x_i} \left<\D^k(v_k + \D v_{k + 1} + \dots + \D^{m - k} v_m) (x), \o_1^{\odot \ell_1 + k} \odot \o_2^{\odot \ell_2} \odot \dots \odot \o_{n - 1}^{\odot \ell_{n - 1}}\right>\right)}_{I_3}.
\end{align}
Now, following the calculation for $I_1$ from the previous subsection and using integration by parts multiple times, we have
\begin{align*}
    I_3 &= \int_\FR \dots \int_\FR y_1^k \frac{\partial^k}{\partial {y_1}^k} \left<\frac{\partial}{\partial x_i} (v_k + \D v_{k + 1} + \dots + \D^{m - k} v_m) (x), \o_1^{\odot \ell_1} \odot \o_2^{\odot \ell_2} \odot \dots \odot \o_{n - 1}^{\odot \ell_{n - 1}}\right> \,dy_1 \dots \,dy_{n - 1}\\
    &= {(- 1)}^k k! \int_\FR \dots \int_\FR \left<\frac{\partial}{\partial x_i} (v_k + \D v_{k + 1} + \dots + \D^{m - k} v_m) (x), \o_1^{\odot \ell_1} \odot \o_2^{\odot \ell_2} \odot \dots \odot \o_{n - 1}^{\odot \ell_{n - 1}}\right> \,dy_1 \dots \,dy_{n - 1}\\
    &= {(- 1)}^k k! \int_\FR \dots \int_\FR \left<\frac{\partial}{\partial x_i} (v_k) (x), \o_1^{\odot \ell_1} \odot \o_2^{\odot \ell_2} \odot \dots \odot \o_{n - 1}^{\odot \ell_{n - 1}}\right> \,dy_1 \dots \,dy_{n - 1}\\
    & \quad + {(- 1)}^k k! \underbrace{\int_\FR \dots \int_\FR \left<\frac{\partial}{\partial x_i} (\D v_{k + 1} + \dots + \D^{m - k} v_m) (x), \o_1^{\odot \ell_1} \odot \o_2^{\odot \ell_2} \odot \dots \odot \o_{n - 1}^{\odot \ell_{n - 1}}\right> \,dy_1 \dots \,dy_{n - 1}.}_{I_4}
\end{align*}
Following a similar calculation as we did for simplifying $I_2$ in the previous subsection, we get $I_4 = 0$. Thus the equation \eqref{eq: derivative of weighted LRT order k} becomes:
\begin{align*}
&\Rc \left(\left<\frac{\partial}{\partial x_i} (v_k) (x), \o_1^{\odot \ell_1} \odot \o_2^{\odot \ell_2} \odot \dots \odot \o_{n - 1}^{\odot \ell_{n - 1}}\right>\right)\\
&\qquad \ \  = \frac{1}{{(- 1)}^k k!} \left[\left<e_i, \o\right> \frac{\partial}{\partial p} \Lc^{w,k}_{\ell_1 \dots \ell_{n - 1}} f - k \left<e_i, \o_1\right> \Lc^{w,k - 1}_{\ell_1+1 \ell_2 \dots \ell_{n - 1}} f\right.\\
&\qquad\qquad \ \ \left.- \Rc \left(\left<x, \o_1\right>^k \frac{\partial}{\partial x_i} \left<(v_0 + \D v_1 + \dots + \D^{k - 1} v_{k - 1}) (x), \o_1^{\odot \ell_1 + k} \odot \o_2^{\odot \ell_2} \odot \dots \odot \o_{n - 1}^{\odot \ell_{n - 1}}\right>\right)\right].
\end{align*}
Applying $\left<e_i, \o\right>$ to the above equation and adding for $i = 1, \dots, n$, we get 
$$\Rc \left(\Delta\left<(v_k) (x), \o_1^{\odot \ell_1} \odot \o_2^{\odot \ell_2} \odot \dots \odot \o_{n - 1}^{\odot \ell_{n - 1}}\right>\right),$$
that is, we know 
$$\left<\overline{\Rc} \left(\overline{\Delta}(v_k) (x)\right), \o_1^{\odot \ell_1} \odot \o_2^{\odot \ell_2} \odot \dots \odot \o_{n - 1}^{\odot \ell_{n - 1}}\right>$$
in terms of the known data. This is true for all $\ell_1, \dots, \ell_{n - 1}$ such that $\ell_1 + \dots + \ell_{n - 1} = m - k$, and therefore applying Radon inversion to equation \eqref{eq: componentwise Radon transform WLRTk}, we obtain componentwise Laplacian of $v_k$. This implies we can determine $v_k$ by solving the Laplacian componentwise.
\end{proof}
\section{Proof of Theorem \ref{th:inversion using transversal}} \label{Sec: Reconstruction using TRT and WTRT}
This section is again divided into three subsections; the first subsection deals with recovering $v_m$ of $f$ (appearing in the decomposition \eqref{eq:decomposition of f}) from $\Tc^m f$ while the other two subsections show the recovery of the other components $v_i$  for  $i = 0, \dots, m - 1$ from the weighted transversal Radon transforms. 
\subsection{Recovery of \texorpdfstring{$v_m$}{vm} from \texorpdfstring{$\Tc^m f$}{Tcmf}} \label{subsec: inversion of TRT}

We start by proving a lemma that we need to achieve the goal of this subsection. 
\begin{lemma}\label{lem: Radon of divergence}
    Let $g$ be a symmetric $m$-tensor field, then the following identity holds:
    \begin{align}\label{eq:Radon of divergence}
   \Rc (\delta^m g) (\o, p) = \frac{\partial^m}{\partial p^m}\Tc^m \left(g\right) (\o, p). 
    \end{align}
\end{lemma}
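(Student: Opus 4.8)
The plan is to prove the identity~\eqref{eq:Radon of divergence} by induction on $m$, using the first-order relation between the transversal Radon transform of a tensor field of order $j$ and of its divergence. The key computational ingredient is property~\eqref{eq:property of Radon transform} of the classical Radon transform, namely that differentiating in $x$ along a direction $a$ corresponds to $\langle \o, a\rangle \partial/\partial p$ after taking the Radon transform; this is exactly the tool already used to prove part~(B) in the proof of Theorem~\ref{th:kernel description}.

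First I would establish the base step: for a symmetric $j$-tensor field $g$, the identity
\[
\Rc(\delta g)(\o,p) \;=\; \frac{\partial}{\partial p}\,\Tc^{j}(\Pi_\o g)(\o,p)
\]
does \emph{not} quite hold as stated, so instead I would prove the cleaner intermediate claim that contracting $\delta$ against $\o^{\odot(j-1)}$ produces the $p$-derivative of the contraction of $g$ against $\o^{\odot j}$. Concretely, writing $(\delta g)_{i_1\dots i_{j-1}} = \partial g_{i_1\dots i_{j-1} i}/\partial x^i$ and contracting with $\o^{i_1}\cdots\o^{i_{j-1}}$, one gets $\partial/\partial x^i$ of $g_{i_1\dots i_{j-1} i}\,\o^{i_1}\cdots\o^{i_{j-1}}$ summed over $i$; applying~\eqref{eq:property of Radon transform} with $a = e_i$ and summing over $i$ converts each $\partial/\partial x^i$ into $\omega^i \partial/\partial p$, and $\omega^i \cdot (\text{the contracted tensor})$ reassembles into $\langle g(x),\o^{\odot j}\rangle$. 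Integrating over $H_{\o,p}$ this reads
\[
\Rc\big(\langle \delta g,\ \o^{\odot(j-1)}\rangle\big)(\o,p) \;=\; \frac{\partial}{\partial p}\,\Tc^{\,j}(g)(\o,p),
\]
which is the single-step version of the lemma. This is essentially the same manipulation as in the proof of (B), just with $\delta$ in place of $\D$.

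The induction step is then immediate: $\delta^m g = \delta^{m-1}(\delta g)$, where $\delta g$ is a symmetric $(m-1)$-tensor field, so by the inductive hypothesis $\Rc(\delta^{m-1}(\delta g)) = (\partial^{m-1}/\partial p^{m-1})\,\Tc^{m-1}(\delta g)$, and by the base step (applied to $g$ with $j=m$, noting that the left side of the base step with $m-1$ nested divergences already accounts for the contraction) we replace $\Tc^{m-1}(\delta g)$ by $(\partial/\partial p)\,\Tc^{m}(g)$. Care is needed here to track that $\delta^m g$ is a genuine scalar (order-$0$ tensor), so $\Rc(\delta^m g)$ on the left needs no contraction, matching the statement; the bookkeeping is to check that each application of the base-step identity peels off one $\delta$ and one $\partial/\partial p$ while correctly handling the symmetrization implicit in $\delta$.

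The main obstacle I anticipate is purely notational rather than conceptual: making sure the symmetrization operator $\sigma$ appearing in the definitions of $\D$ and the contractions does not obstruct the clean passage from $\partial/\partial x^i$ to $\omega^i\partial/\partial p$, and that no combinatorial factors are dropped when iterating. Since the contraction against $\o^{\odot j}$ is totally symmetric, symmetrization acts trivially on it, so this should go through, but it is the step that needs to be written carefully. Once the single-step identity is pinned down, the iteration to get $\partial^m/\partial p^m$ is routine.
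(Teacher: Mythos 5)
Your proposal is correct and follows essentially the same route as the paper: the paper's proof also applies the Radon transform derivative property \eqref{eq:property of Radon transform} once per divergence, converting each $\partial/\partial x^{i}$ into $\omega^{i}\,\partial/\partial p$ and reassembling the $\omega$-factors into the contraction $\langle g,\o^{\odot m}\rangle$, only written as a direct $m$-fold repetition rather than your formal one-step-plus-induction packaging. Your worry about symmetrization is moot, since $\delta$ as defined involves no symmetrization and preserves symmetry automatically, so the argument goes through as you outline.
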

\begin{proof}
    This result is already proved for the vector field case in \cite[Lemma 6]{Kunyansky_2023}. The idea for this general case is the same. We sketch the proof here for the sake of completeness. Consider
\begin{align*}
\Rc \left(\delta^m g\right) (\o, p) &= \Rc \left(\frac{\partial}{\partial x_{i_m}}{(\delta^{m - 1} g)}_{i_1\dots i_m}\right) (\o, p)\\
&= \langle \o, e_{i_m}\rangle \frac{\partial}{\partial p} \Rc\left({(\delta^{m - 1} g)}_{i_1\dots i_m}\right)(\o, p) \quad \text{ (using equation \eqref{eq:property of Radon transform})}\\
&=\langle \o, e_{i_1}\rangle \cdots \langle \o, e_{i_m}\rangle \frac{\partial^m}{\partial p^m}  \Rc \left( g_{i_1\dots i_m}\right) (\o, p) \quad \text{(repeating the above step $(m - 1)$ more times)}\\
&= \frac{\partial^m}{\partial p^m} \Rc \left( g_{i_1\dots i_m}\o^{i_1}\cdots \o^{i_m}\right)  (\o, p)\\
&=  \frac{\partial^m}{\partial p^m}\Tc^m(g) (\o, p).
\end{align*}
\end{proof}
\noindent As proved in Theorem \ref{th:kernel description}, all the terms except $\D^m v_m$ from the decomposition \eqref{eq:decomposition of f} lie in the kernel of the transversal Radon transform. The next theorem shows that $v_m$ can be uniquely recovered from transversal Radon transform of $f$.
\begin{theorem} \label{thm: recovery of $v_m$ using TRT}
    The potential part $v_m$ of a symmetric $m$-tensor field $f$ can be recovered explicitly from the knowledge of its transversal Radon transform $\Tc^m f$. 
\end{theorem}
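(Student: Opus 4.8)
The plan is to combine the kernel description of Theorem~\ref{th:kernel description}(b) with the differential identity for $\Tc^m$ established in its proof, and then to undo an $m$-fold $p$-derivative by a Fourier-multiplier argument. First I would invoke the decomposition $f=\sum_{i=0}^m \D^i v_i$ of Theorem~\ref{Decomposition Result}, in which the top component $v_m\in H^{s+m}_{t+m}(\Rb^n)$ is a scalar field, while $v_0,\dots,v_{m-1}$ are solenoidal. By part~(b) of Theorem~\ref{th:kernel description} the sum $\sum_{i=0}^{m-1}\D^i v_i$ lies in the kernel of $\Tc^m$, so that $\Tc^m f=\Tc^m(\D^m v_m)$. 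Applying the identity $\Tc^m(\D^i v)(\o,p)=\partial_p^{\,i}\,\Tc^{m-i}(v)(\o,p)$ from Section~\ref{Sec:kernel description} with $i=m$ and $v=v_m$ a $0$-tensor, and using $\Tc^0=\Rc$ (the classical Radon transform on scalars), I obtain the clean relation
\begin{equation*}
\Tc^m f(\o,p)=\frac{\partial^m}{\partial p^m}\,\Rc v_m(\o,p).
\end{equation*}

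Next I would recover $\Rc v_m$, hence $v_m$, from this relation. Taking the one-dimensional Fourier transform in $p$ converts $\partial_p^m$ into multiplication by $(\I\sigma)^m$, so that $(\Rc v_m)^{\wedge}(\o,\sigma)=(\I\sigma)^{-m}\,(\Tc^m f)^{\wedge}(\o,\sigma)$ for $\sigma\neq 0$; inverting the Fourier transform produces $\Rc v_m$ explicitly, after which the bijective isometry $\Rc\colon H^{s+m}_{t+m}(\Rb^n)\to H^{s+m+(n-1)/2}_{t+m+(n-1)/2,\,e}(\Sb^{n-1}\times\Rb)$ of \cite{Sharafutdinov_Reshetnyak_formula} delivers $v_m$. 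The bookkeeping here is exactly what forces the hypothesis $f\in H^{s+m}_t(S^m(\Rb^n))$: then $v_m\in H^{s+m}_{t+m}$, its Radon transform lies in $H^{s+m+(n-1)/2}_{t+m+(n-1)/2,e}$, and dividing the datum $\Tc^m f\in H^{s+(n-1)/2}_{t+(n-1)/2,e}$ by $\sigma^m$ raises both Sobolev indices by $m$, landing precisely in that target space.

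The step I expect to be the main obstacle is justifying the division by $\sigma^m$ near $\sigma=0$. This is where the weighted scale $H^s_t$ (with $t$ controlling the behaviour at the origin of the dual variable) is essential: one must verify that $(\Tc^m f)^{\wedge}(\o,\sigma)$ vanishes at $\sigma=0$ to the quantitative order $m$ encoded by the $|\sigma|^{2t}$ weight — which follows once the identity $\Tc^m f=\partial_p^m\Rc v_m$ is in hand — and that the quotient is again even under $(\o,\sigma)\mapsto(-\o,-\sigma)$ so that Radon inversion applies. A subsidiary point is that no constant of integration is lost: the Fourier-multiplier formulation avoids this, since the $H^s_t$-completions of the Schwartz space carry enough decay in $p$ that $\Rc v_m$ is the unique order-$m$ antiderivative in the relevant space. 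Granting these, the recovery of $v_m$ is completely explicit, and the remaining components $v_0,\dots,v_{m-1}$ will be handled in the following subsections via the weighted transversal transforms.
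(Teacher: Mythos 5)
Your proposal is correct, and it reaches $v_m$ by a route that differs from the paper's in the final inversion step. The paper, after the same reduction $\Tc^m f=\Tc^m(\D^m v_m)$ via Theorem \ref{th:kernel description}(b), uses Lemma \ref{lem: Radon of divergence} in the form $\Rc(\delta^m \D^m v_m)=\frac{\partial^m}{\partial p^m}\Tc^m f$: it \emph{differentiates} the data $m$ times in $p$, inverts the classical Radon transform to obtain $\delta^m\D^m v_m$, and then solves the resulting constant-coefficient elliptic equation by Theorem \ref{th:solvability of elliptic system}. You instead use the dual identity $\Tc^m(\D^m v_m)=\frac{\partial^m}{\partial p^m}\Rc v_m$ (the iterated result (B) of Section \ref{Sec:kernel description}, with $\Tc^0=\Rc$ since $v_m$ is a scalar) and \emph{integrate} the data, dividing by $(\I\sigma)^m$ in the one-dimensional Fourier variable before inverting $\Rc$. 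The two are Fourier-equivalent manipulations, and your division by $\sigma^m$ is legitimate exactly as you argue: since $\widehat{\Tc^m f}(\o,\sigma)=(\I\sigma)^m\widehat{\Rc v_m}(\o,\sigma)$ with $\Rc v_m$ already lying in the weighted scale $H^{s'}_{t'}$, the quotient is by construction $\widehat{\Rc v_m}$, and multiplication by $|\sigma|^{-m}$ maps $H^{s'}_{t'}(\Sb^{n-1}\times\Rb)$ isometrically onto $H^{s'+m}_{t'+m}(\Sb^{n-1}\times\Rb)$, so no separate vanishing-order verification is needed. Your approach buys a more elementary treatment of the top component --- it bypasses Theorem \ref{th:solvability of elliptic system} entirely, which works precisely because $v_m$ is a $0$-tensor; the paper's route via $\delta^m$ has the advantage that the same pair (Lemma \ref{lem: Radon of divergence} plus elliptic solvability) is reused verbatim for the lower components $v_{m-k}$, where the unknown is a genuine tensor field and the elliptic step cannot be avoided. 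Two small bookkeeping points: $\Tc^m f$ has parity $(-1)^m$ under $(\o,p)\mapsto(-\o,-p)$ rather than being even (evenness is restored only after dividing by $(\I\sigma)^m$, which is indeed the check you flag), and your Sobolev indices for $v_m$ wobble between $H^{s+m}_{t+m}$ and the value $H^{s+2m}_{t+m}$ that the decomposition actually gives when $f\in H^{s+m}_t$; neither affects the validity of the reconstruction.
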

\begin{proof}
We know a symmetric $m$-tensor field $f$ can be decomposed as follows:
$$ f  = \sum_{i=0}^{m} \D^iv_i.$$
Recall from Theorem \ref{th:kernel description} that the part $\displaystyle \left(\sum_{i=0}^{m-1} \D^iv_i\right)$ of $f$ is in the kernel of $\Tc^m$ and hence we have 
\begin{align*}
\Tc^m f (\o, p) =  \Tc^m\left(\sum_{i=0}^{m} \D^iv_i\right)(\o, p) =  \Tc^m\left(\D^m v_m\right)(\o, p).
\end{align*}
The aim here is to recover $v_m$ from the knowledge of $\Tc^m f$. To achieve this, we use  the following relation obtained from Lemma \ref{lem: Radon of divergence}:
\begin{align*}
 \Rc (\delta^m \D^m v_m) (\o, p) = \frac{\partial^m}{\partial p^m}\Tc^m \left(\D^m v_m\right)(\o, p) = \frac{\partial^m}{\partial p^m}\Tc^m f(\o, p).
\end{align*}
This shows that one can find $\delta^m \D^m  v_m$ explicitly in terms of known data by using the inversion of the usual Radon transform. Finally, we apply Theorem \ref{th:solvability of elliptic system} to find $v_m \in H^{s+2m}_t(\Rb^n)$.
\end{proof}
\noindent Now, we prove that $v_{m - k}$ for $k = 1, \dots, m$ can be reconstructed using weighted transversal Radon transforms of order $k$. The proof is done using mathematical induction on $k$. In the following subsection, we prove it for $k = 1$. We start by proving a proposition that will be crucial to proceed further. 

\subsection{Recovery of \texorpdfstring{$v_{m-1}$}{vm-1} from \texorpdfstring{$\Tc^{w,1}_{\ell_1 \dots \ell_{n - 1}} f$}{Tcm ell1... elln-1 f}} 

\begin{proposition} \label{prop: for solenoidal fields}
If $v$ is a solenoidal symmetric $m$-tensor field, that is $\delta v = 0$, then $\delta^k d^\ell v = 0$ for $k > \ell > 0$. In particular, $\delta^\ell d^\ell v$ is solenoidal.
\end{proposition}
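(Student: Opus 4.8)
The plan is to derive everything from a single commutation identity between the divergence $\delta$ and the inner derivative $d$ (the operator $\D$ of Section~\ref{Sec: Definitions}) on symmetric tensor fields in $\Rb^{n}$. Writing $\overline{\Delta}$ for the componentwise Laplacian, I would first record — it is an elementary computation straight from the definitions of $d$ and $\delta$ in Section~\ref{Sec: Definitions}; see also \cite{Sharafutdinov_Book} — that for every symmetric $r$-tensor field $u$,
\begin{equation}\label{eq: for solenoidal fields}
\delta d\,u \;=\; \frac{1}{r+1}\,\overline{\Delta}\,u \;+\; \frac{r}{r+1}\,d\,\delta u .
\end{equation}
Alongside \eqref{eq: for solenoidal fields} I would note two trivial facts: $\overline{\Delta}$ commutes with both $d$ and $\delta$ (it acts componentwise), and hence $\overline{\Delta}^{\,i}v$ is again solenoidal whenever $v$ is.

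Next I would prove, by induction on $\ell\ge 1$, that for a solenoidal symmetric $m$-tensor field $v$ one has $\delta d^{\ell}v = a_{\ell}\,d^{\ell-1}\overline{\Delta}\,v$, where $a_{\ell}$ is a (positive) constant depending only on $m$ and $\ell$: the case $\ell=1$ is \eqref{eq: for solenoidal fields} combined with $\delta v=0$, and for $\ell\to\ell+1$ one applies \eqref{eq: for solenoidal fields} to $u=d^{\ell}v$, uses $\overline{\Delta}d^{\ell}v=d^{\ell}\overline{\Delta}v$, and inserts the induction hypothesis for $\delta d^{\ell}v$. Iterating this relation — each application being to the still-solenoidal field $\overline{\Delta}^{\,j}v$ — then gives, for $0\le j\le\ell$,
\begin{equation*}
\delta^{j} d^{\ell} v \;=\; c_{j,\ell}\, d^{\ell-j}\,\overline{\Delta}^{\,j} v ,
\end{equation*}
with constants $c_{j,\ell}$ that depend only on $m,\ell,j$ and whose exact values are immaterial here.

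Both assertions then fall out. Putting $j=\ell$ gives $\delta^{\ell} d^{\ell} v = c_{\ell,\ell}\,\overline{\Delta}^{\,\ell} v$, whose divergence is $c_{\ell,\ell}\,\overline{\Delta}^{\,\ell}(\delta v)=0$, so $\delta^{\ell}d^{\ell}v$ is solenoidal. For $k>\ell$ I would write $\delta^{k}d^{\ell}v=\delta^{k-\ell}\!\left(\delta^{\ell}d^{\ell}v\right)=c_{\ell,\ell}\,\overline{\Delta}^{\,\ell}\!\left(\delta^{k-\ell}v\right)$; since $k-\ell\ge 1$ and $\delta v=0$ we have $\delta^{k-\ell}v=0$, and therefore $\delta^{k}d^{\ell}v=0$.

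The one place I expect to have to be careful is the verification of \eqref{eq: for solenoidal fields}: the symmetrizations built into the definitions of $d$ and $\delta$ make the index bookkeeping the delicate part, and it is precisely the coefficients $\tfrac{1}{r+1}$ and $\tfrac{r}{r+1}$ that the two inductions hinge on. Everything after \eqref{eq: for solenoidal fields} is purely formal.
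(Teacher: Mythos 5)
Your proposal is correct and takes essentially the same route as the paper: its key identity $\delta \D u = \tfrac{1}{r+1}\overline{\Delta}u + \tfrac{r}{r+1}\D\delta u$ is exactly the paper's base computation, and the paper likewise inducts on powers to express $\delta^{\ell}\D^{\ell}$ through terms $\overline{\Delta}^{\ell-i}(\D\delta)^{i}$, uses $\delta v=0$ to reduce to $c\,\overline{\Delta}^{\ell}v$, and concludes from $\delta\overline{\Delta}=\overline{\Delta}\delta$. Your variant, which keeps solenoidality inside the induction and applies the identity field-by-field (so the rank-dependent coefficients are tracked as scalars), is a minor reorganization rather than a different method.
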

\begin{proof}
To prove this proposition, it is sufficient to show that $\delta^{\ell + 1}\D^\ell v = 0$. We start by proving the following:
\begin{equation} \label{eq: for solenoidal fields}
    \delta^\ell\D^\ell = \sum_{i = 0}^\ell c_i {(\overline{\Delta}})^{\ell - i} (\D \delta)^i,
\end{equation}
where $\overline{\Delta}$ represents the component-wise Laplacian and $c_i \in \FR$ for $i = 0, 1, \dots, \ell$ are constants. If the above equation is true and $v$ is solenoidal $(\delta v = 0)$, then we get $\delta^\ell\D^\ell v = c_0 {(\overline{\Delta}})^{\ell} v$. Further using $\overline{\Delta} \delta = \delta \overline{\Delta}$, we get $\delta^{\ell + 1}\D^\ell v = 0$. We prove equation \eqref{eq: for solenoidal fields} using mathematical induction on $\ell$. 
For $\ell = 1$, we have
\begin{align*}
(\delta \D v)_{i_1 \dots i_m} &= \delta \left\{\sigma(i_1 \dots i_{m + 1}) \frac{\partial v_{i_1 \dots i_m}}{\partial x_{i_{m + 1}}}\right\} \nonumber\\
&= \frac{1}{m + 1} \delta \left\{\frac{\partial v_{i_1 \dots i_m}}{\partial x_{i_{m + 1}}} + \sum_{j = 1}^m \frac{\partial v_{i_1 \dots i_{j - 1} i_{j + 1} \dots i_{m + 1}}}{\partial x_{i_j}}\right\}\nonumber\\
&= \frac{1}{m + 1} \left\{\frac{\partial^2 v_{i_1 \dots i_m}}{\partial x^2_{i_{m + 1}}} + \sum_{j = 1}^m \frac{\partial}{\partial x_{i_j}} \left(\frac{\partial v_{i_1 \dots i_{j - 1} i_{j + 1} \dots i_{m + 1}}}{\partial x_{i_{m + 1}}}\right)\right\}\nonumber\\
&= \frac{1}{m + 1} \left\{\Delta (v_{i_1 \dots i_m}) + \sum_{j = 1}^m \frac{\partial}{\partial x_{i_j}} {(\delta v)}_{i_1 \dots i_{j - 1} i_{j + 1} \dots i_m}\right\}\nonumber\\
&= \frac{1}{m + 1} \Delta (v_{i_1 \dots i_m}) +  \frac{m}{m + 1} (\D \delta v)_{i_1 \dots i_m}.
\end{align*}
Therefore, we have 
\begin{equation}\label{eq: commutativity of two operators}
\delta \D v = c_0 \overline{\Delta} v + c_1 \D \delta v, \mbox{  where  } c_0 = \frac{1}{m + 1} \mbox{ and } c_1 = \frac{m}{m + 1}.
\end{equation}
This implies that the claim \eqref{eq: for solenoidal fields} is true for $\ell = 1$. Now assume that the claim is true for $\ell = p - 1$ for some $p \geq 2$, that is, there exist constants $c_i$; $i = 0, \dots, p - 1$ such that
\begin{align*}
    \delta^{p - 1}\D^{p - 1}  = \sum_{i = 0}^{p - 1} c_i {(\overline{\Delta})}^{p - 1 - i} (\D \delta)^i.   
\end{align*}
Then for $\ell = p$, we have
\begin{align*}
\delta^p\D^p &= \delta (\delta^{p - 1}\D^{p - 1}) \D \\ 
&= \delta \left(\sum_{i = 0}^{p - 1} c_i {(\overline{\Delta}})^{p - 1 - i} (\D \delta)^i\right) \D , \quad \mbox{ using the induction hypothesis }\\
&= \sum_{i = 0}^{p - 1} c_i {(\overline{\Delta})}^{p - 1 - i} (\delta \D)^{i + 1} \\
&= \sum_{i = 0}^{p - 1} c_i {(\overline{\Delta})}^{p - 1 - i} (c_0 \overline{\Delta}  + c_1 \D \delta)^{i + 1}, \quad \mbox{using equation \eqref{eq: commutativity of two operators}}\\
&= \sum_{i = 0}^p \tilde{c_i} {(\overline{\Delta}})^{p - i} (\D \delta)^i,
\end{align*}
where $\tilde{c_i}$s are functions of $c_0, c_1, \dots, c_{p-1}$. This completes the proof of the proposition.
\end{proof}
\noindent We note that we can expand, $\delta^{m-1}d^{m-1}v_{m-1}= \sum_{i = 1}^{n - 1} \left< (\delta^{m - 1} \D^{m - 1} v_{m - 1}), \omega_i \right> \omega_i + \left<  (\delta^{m - 1} \D^{m - 1} v_{m - 1}), \omega \right> \omega$. Hence the componentwise Radon transform $\overline{\Rc}$ of $\delta^{m - 1} \D^{m - 1} v_{m - 1}$ can be written as follows:
\begin{align} \label{eq: componentwise Radon transform WTRT1}
    \overline{\Rc} (\delta^{m - 1} \D^{m - 1} v_{m - 1}) = \sum_{i = 1}^{n - 1} \left< \overline{\Rc} (\delta^{m - 1} \D^{m - 1} v_{m - 1}), \omega_i \right> \omega_i + \left< \overline{\Rc} (\delta^{m - 1} \D^{m - 1} v_{m - 1}), \omega \right> \omega
\end{align}
The last term in the above equation is the transversal Radon transform of $\delta^{m - 1} \D^{m - 1} v_{m - 1}$, i.e. $$\left< \overline{\Rc} (\delta^{m - 1} \D^{m - 1} v_{m - 1}), \omega \right> \omega =T^1{(\delta^{m - 1} \D^{m - 1} v_{m - 1})}.$$ Recall that $v_{m-1}$ is solenoidal. Thus, $\delta^{m - 1} \D^{m - 1} v_{m - 1}$ is solenoidal (using Proposition \ref{prop: for solenoidal fields}), and the last term in \eqref{eq: componentwise Radon transform WTRT1} is zero (Theorem \ref{th:kernel description}). Further, $\left< \overline{\Rc} (\delta^{m - 1} \D^{m - 1} v_{m - 1}), \omega_i \right>$ can be obtained from $\Tc^{w,1}_{\ell_1 \dots \ell_{n - 1}}$ for $\ell_i = 1$ and $\ell_j = 0$ for $j \neq i$. We have
\begin{equation*}
    \Tc^{w,1}_{\ell_1 \dots \ell_{n - 1}} f = \mathcal{T}^{m} \left(\left< x, \omega_i\right> f\right); \quad 1 \leq i \leq n - 1, \ell_i = 1 \mbox{ and } \ell_j = 0 \mbox{ for } j \neq i
\end{equation*}
Using Lemma \ref{lem: Radon of divergence}, we get
\begin{align}\label{eq:15}
 \frac{\partial^m}{\partial p^m} \Tc^{w,1}_{\ell_1 \dots \ell_{n - 1}} f = \frac{\partial^m}{\partial p^m} \mathcal{T} \left(\left< x, \omega_i\right> f\right) = \Rc(\delta^m \left(\left< x, \omega_i\right> f\right)).   
\end{align}
\noindent By direct calculation, one can verify that,$$\delta \left(\left< x, \omega_i\right> f\right) = \left< \omega_i,  f\right> + \left< x, \omega_i\right> \delta f.$$
\noindent Now, by an inductive argument, it is easy to show that for any $1\leq k\leq m$
\begin{align}\label{eq:16}
  \delta^k \left(\left< x, \omega_i\right> f\right) = k \left< \omega_i, \delta^{k - 1} f\right> + \left< x, \omega_i\right> \delta^k f.  
\end{align}
\noindent By letting $k=m$ in \eqref{eq:16} and substituting it in \eqref{eq:15} we get,
\begin{equation} \label{eq: relation between linear weighted transverse transform and Radon transform}
    \frac{\partial^m}{\partial p^m} \Tc^{w,1}_{\ell_1 \dots \ell_{n - 1}} f = \Rc \left\{m \left< \omega_i, \delta^{m - 1} f\right> + \left< x, \omega_i\right> \delta^m f\right\}.
\end{equation}
\noindent Using Proposition \ref{prop: for solenoidal fields} and the Decomposition \ref{Decomposition Result}, we have
$$\delta^{m - 1} f = \delta^{m - 1} \D^{m - 1} v_{m - 1} + \delta^{m - 1} \D^m v_m$$ and
$$\delta^m f = \delta^m \D^m v_m.$$
\noindent Substituting the above relations in equation \eqref{eq: relation between linear weighted transverse transform and Radon transform}, we get
\begin{align*}
\frac{\partial^m}{\partial p^m} \Tc^{w,1}_{\ell_1 \dots \ell_{n - 1}} f &= m \left< \omega_i, \overline{\Rc} \left(\delta^{m - 1} \D^{m - 1} v_{m - 1}\right)\right> + m \left< \omega_i, \overline{\Rc} \left(\delta^{m - 1} \D^m v_m\right)\right> + \Rc \left(\left< x, \omega_i \right> \delta^m \D^m v_m \right)\\
\implies \left< \omega_i, \overline{\Rc} \left(\delta^{m - 1} \D^{m - 1} v_{m - 1}\right)\right> &= \frac{1}{m} \left[\frac{\partial^m}{\partial p^m} \Tc^{w,1}_{\ell_1 \dots \ell_{n - 1}} f - m \left< \omega_i, \overline{\Rc} \left(\delta^{m - 1} \D^m v_m\right)\right> - \Rc \left(\left< x, \omega_i \right> \delta^m \D^m v_m \right)\right]
\end{align*}
\noindent Above equation is true for all $1 \leq i \leq n - 1$. Substituting the above equation in equation \eqref{eq: componentwise Radon transform WTRT1}, we get the componentwise Radon transform of $\delta^{m - 1} \D^{m - 1} v_{m - 1}$. Then using Radon inversion, $\delta^{m - 1} \D^{m - 1} v_{m - 1}$ can be obtained. Again, we use Theorem \ref{th:solvability of elliptic system} to find $v_{m-1} \in H^{s+2m-1}_t(\Rb^n)$, which completes the goal of this section.

\subsection{Recovery of \texorpdfstring{$v_{m-k}$}{vmk} from \texorpdfstring{$\Tc^{w,k}_{\ell_1 \dots \ell_{n - 1}} f$}{Tcm ell1... elln-1 f}} 
Finally, we assume $v_{m - k + 1}, v_{m - k + 2}, \dots, v_m$ are known for $2 \leq k \leq m$ and  show $\delta^{m - k} \D^{m - k} v_{m - k}$ can be obtained from the transforms $\Tc^{w,k}_{\ell_1 \dots \ell_{n - 1}} f$. Similar to equation \eqref{eq: componentwise Radon transform}, the componentwise Radon transform $\overline{\Rc}$ of $\delta^{m - k} \D^{m - k} v_{m - k}$ can be written as follows:
\begin{align}
    &\overline{\Rc} \left(\delta^{m - k} \D^{m - k} v_{m - k}\right)\nonumber\\ 
    &\quad = \sum_{\ell_1 + \dots + \ell_{n - 1} = k} \left< \overline{\Rc} \left(\delta^{m - k} \D^{m - k} v_{m - k}\right), \o_1^{\odot \ell_1} \odot \o_2^{\odot \ell_2} \odot \dots \odot \o_{n - 1}^{\odot \ell_{n - 1}} \right>\o_1^{\odot \ell_1} \odot \o_2^{\odot \ell_2} \odot \dots \odot \o_{n - 1}^{\odot \ell_{n - 1}}\nonumber\\
    &\qquad \qquad + \sum_{\ell_1 + \dots + \ell_n = k - 1} \left< \overline{\Rc} \left(\delta^{m - k} \D^{m - k} v_{m - k}\right), \o \odot \o_1^{\odot \ell_1} \odot \o_2^{\odot \ell_2} \odot \dots \odot \o_{n - 1}^{\odot \ell_{n - 1}} \odot \o^{\odot \ell_n} \right>\nonumber\\
    &\hspace{7cm} \times \o_1^{\odot \ell_1} \odot \o_2^{\odot \ell_2} \odot \dots \odot \o_{n - 1}^{\odot \ell_{n - 1}} \odot \o^{\odot \ell_n + 1}.
\end{align}
The terms in the second summation of the above equation are mixed and transversal Radon transforms of $\delta^{m - k} \D^{m - k} v_{m - k}$. From Proposition \ref{prop: for solenoidal fields}, we know that $\delta^{m - k} \D^{m - k} v_{m - k}$ is solenoidal and using Theorem \ref{th:kernel description}, we get that the second summation is zero. Further, we want to show that 
$$\left< \overline{\Rc} \left(\delta^{m - k} \D^{m - k} v_{m - k}\right), \o_1^{\odot \ell_1} \odot \o_2^{\odot \ell_2} \odot \dots \odot \o_{n - 1}^{\odot \ell_{n - 1}} \right>$$ 
can be written in terms of $W^k_{\ell_1 \dots \ell_{n - 1}} f$. We prove this for $\ell_1 = k$ and $\ell_j = 0$ for $j \neq 1$. That is, we show that $\left< \overline{\Rc} \left(\delta^{m - k} \D^{m - k} v_{m - k}\right), \o_1^{\odot k}\right>$ can be written in terms of $W^k_{\ell_1 \dots \ell_{n - 1}} f$ for $\ell_1 = k$ and $\ell_j = 0$ for $j \neq 1$. The proof for the other cases can be done in a similar manner.\\

\noindent For $g = \left<x, \o_1\right> ^k f$, we have
$$\Tc^{w,k}_{\ell_1 \dots \ell_{n - 1}} f = \mathcal{T} g$$ where $\ell_1 = k$ and $\ell_j = 0$ for $j \neq 1$. Using Lemma \ref{lem: Radon of divergence}, we get
\begin{equation} \label{eq: relation between weighted transverse transform and Radon transform}
    \frac{\partial^m}{\partial p^m} \Tc^{w,k}_{\ell_1 \dots \ell_{n - 1}} f = \frac{\partial^m}{\partial p^m} \mathcal{T} g = \Rc(\delta^m g).
\end{equation}
\noindent Now, we show that 
\begin{equation} \label{eq: delta operator induction}
    \delta^p g = \sum_{i = r}^{k - 1} \binom{k}{i} p (p - 1) \dots \left((p + 1) - (k - i)\right) \left<x, \o_1\right>^i \left<\o_1^{\odot k - i}, \delta^{p - (k - i)} f\right> + \left<x, \o_1\right>^k \delta^p f 
\end{equation}
where 
\[r = \begin{cases*}
    k - p; & $p < k$\\
    0; & $p \geq k$.
\end{cases*}\]
This is proved using mathematical induction on $p$. For $p = 1$, applying $\delta$ operator to $g = \left<x, \o_1\right> ^k f$ gives
\begin{equation*}
    \delta g = k \left<x, \o_1\right>^{k - 1} \left<\o_1, f\right> + \left<x, \o_1\right>^k \delta f.
\end{equation*}
Hence, equation \eqref{eq: delta operator induction} is true for $p = 1$. Assume that it is true for all $p < k$. Then for $p = k$, we have
\begin{align*}
    \delta^k g &= \delta (\delta^{k - 1} g)\\
    &= \delta \left[ \sum_{i = 1}^{k - 1} \binom{k}{i} \left((k - 1) (k - 2) \dots i\right) \left<x, \o_1\right>^i \left<\o_1^{\odot k - i}, \delta^{i - 1} f\right> + \left<x, \o_1\right>^k \delta^{k - 1} f\right]\\
    &= \sum_{i = 1}^{k - 1} \binom{k}{i} \left((k - 1) (k - 2) \dots i\right) \left(i \left<x, \o_1\right>^{i - 1} \left<\o_1^{\odot k + 1 - i}, \delta^{i - 1} f\right>\right)\\
    &\qquad + \sum_{i = 1}^{k - 1} \binom{k}{i} \left((k - 1) (k - 2) \dots i\right) \left(\left<x, \o_1\right>^i \left<\o_1^{\odot k - i}, \delta^i f\right>\right) + k \left<x, \o_1\right>^{k - 1} \left<\o_1,\delta^{k - 1} f\right> + \left<x, \o_1\right>^k \delta^k f\\
    &= \sum_{i = 0}^{k - 2} \binom{k}{i + 1} \left((k - 1) (k - 2) \dots (i + 1)\right) \left((i + 1) \left<x, \o_1\right>^{i} \left<\o_1^{\odot k - i}, \delta^{i} f\right>\right) \hspace{1.5cm} \mbox{(replacing $i$ with $i + 1$)}\\
    &\qquad + \sum_{i = 1}^{k - 1} \binom{k}{i} \left((k - 1) (k - 2) \dots i\right) \left(\left<x, \o_1\right>^i \left<\o_1^{\odot k - i}, \delta^i f\right>\right) + k \left<x, \o_1\right>^{k - 1} \left<\o_1,\delta^{k - 1} f\right> + \left<x, \o_1\right>^k \delta^k f\\
    &= \left(k (k - 1) \dots 1\right) \left<\o_1^{\odot k}, f\right> + \sum_{i = 1}^{k - 2} (k - 1) (k - 2) \dots (i + 1) \left[(i + 1)\binom{k}{i + 1} + i \binom{k}{i}\right] \left<x, \o_1\right>^{i} \left<\o_1^{\odot k - i}, \delta^{i} f\right>\\
    &\qquad + 2 k \left<x, \o_1\right>^{k - 1} \left<\o_1,\delta^{k - 1} f\right> + \left<x, \o_1\right>^k \delta^k f\\
    &= \sum_{i = 0}^{k - 1} \binom{k}{i} k (k - 1) \dots (i+1) \left<x, \o_1\right>^i \left<\o_1^{\odot k - i}, \delta^{i} f\right> + \left<x, \o_1\right>^k \delta^k f
\end{align*}
using the relation $$(i + 1)\binom{k}{i + 1} + i \binom{k}{i} = k \binom{k}{i}.$$

\noindent Repeating the above steps multiple times, equation \eqref{eq: delta operator induction} can be proved for $p \geq k$. Since $1 \leq k \leq m$, putting $p = m$ in equation \eqref{eq: delta operator induction}, we get

\begin{align*}
    \delta^m g 
    = \sum_{i = 0}^{k - 1} \binom{k}{i} m (m - 1) \dots \left((m + 1) - (k - i)\right) \left<x, \o_1\right>^i \left<\o_1^{\odot k - i}, \delta^{m - (k - i)} f\right> + \left<x, \o_1\right>^k \delta^m f.
\end{align*}
\noindent Further, using Proposition \ref{prop: for solenoidal fields} and the Decomposition \ref{Decomposition Result}, we have
\begin{align*}
\delta^m f &= \delta^m \D^m v_m\\
\delta^{m - 1} f &= \delta^{m - 1} \D^{m - 1} v_{m - 1} + \delta^{m - 1} \D^m v_m\\
&\vdots\\
\delta^{m - k} f &= \delta^{m - k} \D^{m - k} v_{m - k} + \delta^{m - k} \D^{m - k + 1} v_{m - k + 1} + \dots + \delta^{m - k} \D^{m} v_m
\end{align*}
Since $v_{m - k + 1}, v_{m - k + 2}, \dots, v_m$ are known, substituting the above values in equation \eqref{eq: relation between weighted transverse transform and Radon transform} gives an expression for $\left<\overline{\Rc}\left(\delta^{m - k} \D^{m - k} v_{m - k}\right), \o_1^{\odot k}\right>$ in terms of the known values. Similarly, each term of equation \eqref{eq: componentwise Radon transform} can be written in terms of $v_{m - k + 1}, v_{m - k + 2}, \dots, v_m$ and the known transforms. Then using Radon inversion, $\delta^{m - k} \D^{m - k} v_{m - k}$ can be recovered. Finally, $v_{m - k}$ can be obtained by using Theorem \ref{th:solvability of elliptic system}, and this completes the theorem.  
\section{Acknowledgements}\label{sec: acknowledge} RM was partially supported by SERB SRG grant No. SRG/2022/000947. CT was supported by the Prime Minister's Research Fellowship from the Government of India.
\bibliographystyle{amsplain}
\bibliography{reference}

\end{document}